\title{Adjoint functors on the derived category of motives}
\author{Burt Totaro}
\date{  }
\def\Z{\text{\bf Z}}
\def\Q{\text{\bf Q}}
\def\C{\text{\bf C}}
\def\P{\text{\bf P}}
\def\F{\text{\bf F}}
\def\arrow{\rightarrow}
\def\imp{\Rightarrow}
\def\eff{\text{eff}}
\def\tr{\text{tr}}
\def\hocolim{\text{hocolim}}
\def\Hom{\text{Hom}}
\def\et{\text{et}}
\def\Tor{\text{Tor}}
\def\QQ{\overline{\Q}}
\def\Spec{\text{Spec}}
\begin{document}
\maketitle
\newtheorem{theorem}{Theorem}[section]
\newtheorem{corollary}[theorem]{Corollary}
\newtheorem{lemma}[theorem]{Lemma}

\theoremstyle{definition}
\newtheorem{definition}[theorem]{Definition}
\newtheorem{example}[theorem]{Example}

\theoremstyle{remark}
\newtheorem{remark}[theorem]{Remark}

Voevodsky's derived category of motives is the main arena
today for the study of algebraic cycles and motivic cohomology.
In this paper we study
whether the inclusions of three important
subcategories of motives have a left
or right adjoint. These adjoint functors are useful constructions
when they exist, describing the best approximation to an arbitrary motive
by a motive in a given subcategory.
We find a fairly complete picture:
some adjoint functors exist, including
a few which were previously unexplored, while others do not exist
because of the failure of finite generation for Chow groups
in various situations. For some base fields,
we determine exactly which adjoint functors exist.

For a field $k$ and commutative ring $R$, we consider three
subcategories of the derived category of motives, $DM(k;R)$:
the category
$DMT(k;R)$ of {\it mixed Tate motives},
the category
$DM_{\eff}(k;R)$ of {\it effective motives},
and the category $D_0(k;R)$
of (non-effective) motives of dimension $\leq 0$. Each is a
{\it localizing subcategory }of $DM(k;R)$, meaning
a full triangulated subcategory that is closed under arbitrary
direct sums in $DM(k;R)$.
It is a useful formal property of the category $DM(k;R)$
that it contains
the direct sum and the product of an arbitrary set of objects, not
necessarily finite.

In these three cases, Neeman's Brown Representability Theorem
\cite{Neemanduality}
implies that the inclusion $f^*$ of the subcategory has a right
adjoint $f_*$, and that $f_*$ in turn has a right adjoint
$f^{(1)}$:
$$f^*\dashv f_* \dashv f^{(1)}$$
For example, if $f^*$ denotes the inclusion of $DMT(k;R)$ into $DM(k;R)$,
the existence of $f_*$ means that for every motive $M$
in $DM(k;R)$, there is a mixed Tate motive $C(M)$ with a map
$C(M)\arrow M$ that induces an isomorphism on motivic homology.
This functor has been useful,
for example in characterizing mixed Tate motives
as the motives which satisfy the motivic K\"unneth property
\cite[Theorem 7.2]{Totaromotive}. The functor $f^{(1)}$ has probably
not been considered before.

On the other hand, for many fields $k$ and rings $R$, and for
the three subcategories mentioned above,
the sequence of adjoint functors above
cannot be extended to the left or right,
because of various failures of finite generation for motivic cohomology.

For example, for any algebraically closed field $k$ which is not the algebraic
closure of a finite field, we show that the inclusion $f^*$ of $DMT(k;\Q)$
into $DM(k;\Q)$ does not have a left adjoint, using that the Mordell-Weil
group of an elliptic curve over $k$ has infinite rank.
In particular,
it follows that a product of mixed Tate motives need not be mixed
Tate. We deduce that the analogous subcategory
of {\it cellular spectra }in the stable homotopy category
$SH(k)$ is not closed under products
for some fields $k$. (The opposite conclusion
has been announced at least once.)

By results
of Balmer, Dell'Ambrogio, and Sanders, in the case of $DMT(k;R)$
(but not for the other subcategories we consider), $f^*$ has a left
adjoint if and only if it has a three-fold right adjoint
\cite[Theorem 3.3]{BDS}.
So, for many fields $k$ and rings $R$,
the sequence of adjoint functors stops with the three listed above.

By contrast, the Tate-Beilinson conjecture would imply
that the inclusion of $DMT(k;\Q)$ into $DM(k;\Q)$ is a
{\it Frobenius functor }when $k$ is algebraic over a finite field (Theorem
\ref{finitefield}). This is the strong
property that the right
adjoint to the inclusion is also left adjoint to the inclusion
(and so there is an infinite sequence of adjoints).
It is not clear what to
expect when $k$ is a number field, or when $k$ is replaced by
a regular scheme of finite type over $\Z$.

Next, an example by Ayoub, based on Clemens's example of a complex
variety with Griffiths group of infinite rank,
implies that the inclusion of $DM_{\eff}(\C,\Q)$
into $DM(\C,\Q)$ does not have a three-fold right adjoint
\cite[Proposition A.1]{Huber}. The same goes
for any algebraically closed field of characteristic
zero (Theorem \ref{effright}).
We also show that for many fields $k$ and rings $R$,
the inclusion of $DM_{\eff}(k;R)$ into $DM(k;R)$
does not have a left adjoint
(Theorem \ref{effleft}).

An example by Ayoub and Barbieri-Viale, again building
on Clemens's example, implies that the inclusion
of our third subcategory
$D_0(\C;\Q)$ into $DM(\C;\Q)$ does not have a left adjoint
\cite[section 2.5]{AB}. This
can be viewed as showing that certain generalizations
of the Albanese variety do not exist. We give an analogous example
with finite coefficients, showing that
the inclusion of $D_0(k;\F_p)$ in $DM(k;\F_p)$
does not have a left adjoint in many cases (Theorem \ref{d0left}).
These results imply that
the subcategory $D_0(k;R)$ need not be closed under arbitrary products
in $DM(k;R)$, a question that arose during
the construction of the motive of a quotient stack
\cite[after Lemma 8.8]{Totaromotive}. We also show that for many fields $k$
and rings $R$, the inclusion of $D_0(k;R)$ into $DM(k;R)$
does not have a three-fold right adjoint (Theorem \ref{d0right}).

Finally, we prove a positive result: for any scheme $X$ of finite type
over a field $k$ such that the compactly supported
motive $M^c(X)$ in $DM(k;R)$
is mixed Tate, the Chow groups $CH_*(X;R)$ are finitely generated
$R$-modules (Theorem \ref{mixedTatefinite}). This helps to clarify
what it means for a scheme to be mixed Tate.

I thank Bruno Kahn and Tudor P\u{a}durariu for useful conversations.
This work was supported by The Ambrose Monell
Foundation and Friends, via the Institute for Advanced Study,
and by NSF grant DMS-1303105.

\section{Notation}
\label{notation}

Let $k$ be a field.
The {\it exponential characteristic }of $k$ means 1 if $k$ has characteristic
zero, or $p$ if $k$ has characteristic $p>0$. Let $R$ be a commutative
ring in which the exponential characteristic of $k$ is invertible.
Following Cisinski and D\'eglise, the derived category $DM(k;R)$
of motives
over $k$ with coefficients in $R$
is defined to be the homotopy category of $G_m^{\tr}$-spectra
of (unbounded) chain complexes of Nisnevich sheaves with transfers
which are $A^1$-local
\cite[section 2.3]{RO}, \cite[Example 6.25]{CDlocal}.
This is a triangulated category with arbitrary direct sums.
(Voevodsky originally considered the subcategory
$DM^{\text{eff}}_{-}(k)$ of ``bounded above effective motives''
\cite{Voevodskytri}.)
For $k$ perfect, R\"ondigs and \O stv\ae r showed that
the category $DM(k;\Z)$ is equivalent to the homotopy
category of modules over the motivic Eilenberg-MacLane spectrum $H\Z$
in Morel-Voevodsky's stable homotopy category $SH(k)$ \cite[Theorem 1]{RO}.

A separated scheme $X$ of finite type over $k$ determines two motives
in $DM(k;R)$, $M(X)$ (called the motive of $X$) and $M^c(X)$
(called the compactly supported motive of $X$). These two motives
are isomorphic if $X$ is proper over $k$. Also, there are objects
$R(j)$ in $DM(k;R)$ for integers $j$, called {\it Tate motives}.
Here $DM(k;R)$ is a tensor triangulated category with identity
object $R(0)$, and $R(a)\otimes R(b)\cong R(a+b)$ for integers
$a$ and $b$. The motive of projective space
is $M(\P^n_k)\cong \oplus_{j=0}^n R(j)[2j]$.

Voevodsky defined {\it motivic cohomology }and (Borel-Moore)
{\it motivic homology }for
any separated scheme $X$ of finite type over $k$
by 
$$H^j(X,R(i))=\Hom(M(X),R(i)[j])$$
and
$$H_j(X,R(i))=\Hom(R(i)[j],M^c(X))$$
\cite[section 2.2]{Voevodskytri}.
These include the Chow groups of algebraic cycles
with coefficients in $R$,
as $H_{2i}(X,R(i))\cong CH_i(X;R):=CH_i(X)\otimes_{\Z}R$
and $H^{2i}(X,R(i))\cong CH^i(X;R):=CH^i(X)\otimes_{\Z}R$.
More generally, the motivic cohomology and motivic homology
of any object $N$ in $DM(k;R)$ are defined by
$H^j(N,R(i))=\Hom(N,R(i)[j])$ and $H_j(N,R(i))=\Hom(R(i)[j],N)$.

For an equidimensional separated scheme $X$ of dimension $n$
over $k$, motivic homology is isomorphic to Bloch's higher Chow groups:
$$CH^{n-i}(X,j-2i;R)\cong H_j(X,R(i)).$$
It follows that
the motivic homology $H_j(X,R(i))$ of a separated $k$-scheme $X$
is zero unless
$j\geq 2i$ and $j\geq i$ and $i\leq \dim(X)$. The isomorphism
between motivic homology and higher Chow groups was proved
under mild assumptions in \cite[Proposition 4.2.9]{Voevodskytri}; see
\cite[section 5]{Totaromotive} for references to the full statement.

The triangulated category $DM(k;R)$
is compactly generated
\cite[Theorem 11.1.13]{CDtri}, \cite[Proposition 5.5.3]{Kelly}.
(For $k$ imperfect, see \cite[Proposition 8.1]{CDint}.)
A set of compact generators is given by the motives $M(X)(a)$
for smooth projective varieties $X$ over $k$ and integers $a$.
Since $DM(k;R)$ is compactly generated,
it contains arbitrary products
as well as arbitrary direct sums \cite[Proposition 8.4.6]{Neemanbook}.

Define a {\it thick }subcategory of a triangulated category
to be a strictly full triangulated subcategory
that is closed under direct summands. We use the following
result of Neeman's \cite[Theorem 2.1]{Neemanduality}.

\begin{theorem}
\label{compactgens}
Let ${\cal T}$ be a compactly generated triangulated category,
and let ${\cal P}$ be a set of compact generators.
Then any compact object in ${\cal T}$
belongs to the smallest thick
subcategory of ${\cal T}$ that contains ${\cal P}$.
\end{theorem}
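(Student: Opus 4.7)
The plan is to show that every compact object of ${\cal T}$ is a retract of an object built from ${\cal P}$ by finitely many cofiber sequences and finite coproducts, hence lies in the thick closure ${\cal S}$ of ${\cal P}$. The strategy is a cell-attachment construction realizing an arbitrary $X\in{\cal T}$ as a homotopy colimit of a tower $\{X_n\}$ in which each $X_n$ is built from ${\cal P}$ using (possibly infinite) coproducts and finitely many cofiber sequences, followed by two applications of compactness to reduce to finite index sets.

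For $X\in{\cal T}$, I would build a sequence $0=X_0\to X_1\to X_2\to\cdots$ with compatible maps $X_n\to X$ as follows. Given $X_n\to X$ with cone $C_n$, let $K_n=\bigoplus_\alpha P_\alpha[j_\alpha]$ be the coproduct indexed by \emph{all} maps $\alpha\colon P_\alpha[j_\alpha]\to C_n$ with $P_\alpha\in{\cal P}$ and $j_\alpha\in\Z$, and define $X_{n+1}$ to fit into a cofiber sequence
$$X_n\to X_{n+1}\to K_n\to X_n[1]$$
together with a lift $X_{n+1}\to X$ whose cone $C_{n+1}$ kills every such $\alpha$. The canonical map $\hocolim_n X_n\to X$ then induces a bijection on $\Hom(P[j],-)$ for every $P\in{\cal P}$ and $j\in\Z$, by the Milnor $\lim^1$ sequence combined with the vanishing of the classes $\alpha\in\Hom(P[j],C_n)$ at the next stage; since ${\cal P}$ generates, this forces $\hocolim_n X_n\cong X$.

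Now suppose $X$ is compact. Applying $\Hom(X,-)$ to the Milnor triangle $\bigoplus_n X_n\to\bigoplus_n X_n\to\hocolim_n X_n\cong X$, the identity on $X$ factors through some $X_n$, so $X$ is a retract of $X_n$. The object $X_n$ itself need not lie in ${\cal S}$ because of the set-indexed coproducts in its construction; however, $X_n$ is the filtered colimit of those sub-objects $X_n^\lambda\in{\cal S}$ formed by using only finitely many summands $\alpha$ at each attachment stage $0,\ldots,n-1$. Compactness of $X$ then factors the splitting $X\to X_n$ through some $X_n^\lambda$, exhibiting $X$ as a retract of $X_n^\lambda\in{\cal S}$ and placing $X$ in ${\cal S}$.

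The main obstacle is this final factoring step: expressing $X_n$ as a filtered colimit of sub-objects in ${\cal S}$ compatibly with the inductive construction, and verifying that compactness of $X$ genuinely applies, requires careful bookkeeping of finite sub-sums across all prior stages $X_0,\ldots,X_{n-1}$. This simultaneous-enumeration argument is the technical heart of Neeman's proof \cite{Neemanduality}; the two earlier steps (building the tower and using compactness once to get a retract of $X_n$) are standard cell-attachment formalism in compactly generated triangulated categories.
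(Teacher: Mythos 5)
The paper does not prove this theorem: it is quoted verbatim as Theorem 2.1 of \cite{Neemanduality}, so there is no in-house argument to compare with. Your outline is Neeman's own strategy, and the first two-thirds of it are sound: the cell tower $0=X_0\to X_1\to\cdots$ in which each map $\Hom(P[j],C_n)\to\Hom(P[j],C_{n+1})$ is zero, the identification $\hocolim_n X_n\cong X$ via compactness of the objects of ${\cal P}$ and the generation hypothesis, and the first application of compactness of $X$ to the triangle $\bigoplus_n X_n\to\bigoplus_n X_n\to\hocolim_n X_n$, which exhibits $X$ as a retract of some $X_n$. (Minor point: to obtain the lift $X_{n+1}\to X$ you should define $X_{n+1}$ as the cone of the composite $K_n[-1]\to C_n[-1]\to X_n$; the extension of $X_n\to X$ over $X_{n+1}$ then exists because consecutive maps in a triangle compose to zero, and the vanishing of $\Hom(P[j],C_n)\to\Hom(P[j],C_{n+1})$ needs an octahedron.)

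The genuine gap is in your last step, and your own phrasing signals it: ``$X_n$ is the filtered colimit of those sub-objects $X_n^\lambda\in{\cal S}$'' is not a statement that makes sense in a triangulated category, which has no filtered colimits, and $X_n$ is not a colimit of the $X_n^\lambda$ in any form that would let you apply compactness of $X$ directly. What is actually true, and what must be proved, is: if a compact object $N$ maps to an iterated extension $Y$ of set-indexed coproducts $F_0,\dots,F_{n-1}$ of compact objects, then there is a map $N'\to N$ whose cone is an iterated extension of \emph{finite} sub-coproducts $F_b'$ of the $F_b$ and through which the given map factors appropriately; one proves this by induction on $n$, applying compactness once per stage (to the successively modified source object, which stays compact because its ``cells'' are finite) and assembling the stages with the octahedral axiom. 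Applied to the splitting $X\to X_n$, this exhibits $X$ as a retract of an iterated extension of finite coproducts of shifts of objects of ${\cal P}$, hence as an object of ${\cal S}$. This is exactly the content of Lemma \ref{subsum} in the present paper (extracted from the proof of Lemma 2.3 of \cite{Neemansmash}), where it is used for the same purpose in the proof of Theorem \ref{mixedTatefinite}. Since you explicitly deferred precisely this step to ``Neeman's proof,'' your proposal is an accurate road map rather than a complete argument; carrying out the induction of Lemma \ref{subsum} is what remains.
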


\section{Background on triangulated categories}
\label{triang}

We consider three subcategories of $DM(k;R)$ in this paper.
The category $DMT(k;R)$ of {\it mixed Tate motives }is the smallest
localizing subcategory that contains $R(j)$ for all integers
$j$. The category $DM_{\eff}(k;R)$ of {\it effective motives }is
the smallest localizing subcategory that contains $M(X)$
for every smooth projective variety $X$ over $k$. The category
$D_0(k;R)$ of (non-effective) motives of dimension $\leq 0$
is the smallest localizing subcategory that contains $M(X)(-b)$
for every smooth projective variety $X$ over $k$ and every integer
$b\geq\dim(X)$.

We use the following consequences of Neeman's Brown
Representability Theorem \cite[Corollary 2.3]{BDS},
\cite[Theorem 5.1]{Neemanduality}.

\begin{theorem}
\label{adjointcriterion}
Let $F\colon {\cal S}\arrow {\cal T}$ be a exact functor between
triangulated categories, and assume that ${\cal S}$ is compactly
generated. Then:

(1) $F$ has a right adjoint if and only if it preserves arbitrary
direct sums.

(2) $F$ has a left adjoint if and only if preserves arbitrary
products.
\end{theorem}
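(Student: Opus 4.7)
The plan is to treat the two parts in parallel. In each, the ``only if'' direction is standard general nonsense: a functor with a right adjoint is itself a left adjoint, hence preserves direct sums; dually, a functor with a left adjoint preserves products. So the content lies in the ``if'' directions.

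For the ``if'' direction of (1), fix $Y\in{\cal T}$ and consider the contravariant functor
\[ H_Y:=\Hom_{{\cal T}}(F(-),Y)\colon {\cal S}^{\text{op}}\arrow \text{Ab}. \]
Because $F$ is exact, $H_Y$ is cohomological. Because $F$ preserves arbitrary direct sums and $\Hom(-,Y)$ sends direct sums to products, $H_Y$ sends direct sums in ${\cal S}$ to products of abelian groups. Neeman's Brown Representability Theorem applied to the compactly generated category ${\cal S}$ then gives an object $G(Y)\in{\cal S}$ and a natural isomorphism $\Hom_{{\cal S}}(-,G(Y))\cong H_Y$. The usual argument promotes $Y\mapsto G(Y)$ to a functor ${\cal T}\arrow {\cal S}$, and the natural isomorphism exhibits $G$ as a right adjoint to $F$.

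For the ``if'' direction of (2), fix $Y\in{\cal T}$ and instead consider the covariant functor
\[ H^Y:=\Hom_{{\cal T}}(Y,F(-))\colon {\cal S}\arrow \text{Ab}. \]
Again $H^Y$ is cohomological. The hypothesis that $F$ preserves arbitrary products, combined with the fact that $\Hom(Y,-)$ preserves products, implies that $H^Y$ sends products in ${\cal S}$ to products of abelian groups. I would then invoke the covariant (``dual'') form of Brown Representability for compactly generated triangulated categories, as in \cite[Corollary 2.3]{BDS}, \cite[Theorem 5.1]{Neemanduality}, to represent $H^Y$ by some $G(Y)\in{\cal S}$ with $\Hom_{{\cal S}}(G(Y),-)\cong H^Y$. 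Functoriality in $Y$ yields the desired left adjoint $G$.

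The only real input is Neeman's representability theorem in both variances; the rest is formal. I expect the covariant version used in (2) to be the more delicate ingredient, since unlike the classical contravariant form it genuinely requires compact (not merely small) generators of ${\cal S}$; this is also where the compact generation hypothesis enters the argument in an essential way.
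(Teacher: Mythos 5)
Your argument is correct and is exactly the standard proof underlying this statement: the paper itself gives no proof but simply cites \cite[Corollary 2.3]{BDS} and \cite[Theorem 5.1]{Neemanduality}, and those sources argue precisely as you do, via Brown representability for $\Hom_{\cal T}(F(-),Y)$ in case (1) and Neeman's Brown representability for the dual (valid for compactly generated categories) for $\Hom_{\cal T}(Y,F(-))$ in case (2). Your closing remark correctly identifies the dual form as the delicate ingredient where compact generation is essential.
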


\begin{theorem}
\label{compact}
Let $F\colon {\cal S}\arrow {\cal T}$ be an exact functor
between triangulated categories with right adjoint $G$,
and assume that ${\cal S}$ is compactly generated.
Then $F$ preserves compact objects if and only if $G$
preserves arbitrary direct sums.
\end{theorem}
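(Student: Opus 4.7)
The plan is to prove both directions by manipulating the natural adjunction isomorphism $\Hom_{\cal T}(F(X),Y)\cong \Hom_{\cal S}(X,G(Y))$, combined with the defining property of compact objects, namely that $\Hom(C,-)$ commutes with direct sums when $C$ is compact. The key input from the hypothesis that ${\cal S}$ is compactly generated is the standard fact that a morphism $\varphi\colon A\arrow B$ in ${\cal S}$ is an isomorphism if and only if $\Hom(C,\varphi)$ is a bijection for every compact object $C$ (this follows from compact generation together with the triangulated structure: one considers the cone of $\varphi$ and uses that a nonzero object in a compactly generated triangulated category admits a nonzero map from some compact generator).

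For the direction ``$F$ preserves compacts $\imp$ $G$ preserves direct sums,'' I would fix a family $\{Y_i\}$ in ${\cal T}$ and consider the canonical map $\bigoplus_i G(Y_i)\arrow G(\bigoplus_i Y_i)$. Applying $\Hom_{\cal S}(C,-)$ for $C\in {\cal S}$ compact, the chain of identifications
\[ \Hom_{\cal S}\bigl(C,\bigoplus_i G(Y_i)\bigr)=\bigoplus_i\Hom_{\cal S}(C,G(Y_i))=\bigoplus_i\Hom_{\cal T}(F(C),Y_i)=\Hom_{\cal T}\bigl(F(C),\bigoplus_i Y_i\bigr)=\Hom_{\cal S}\bigl(C,G(\bigoplus_i Y_i)\bigr) \]
uses, in order, compactness of $C$, adjunction, compactness of $F(C)$ (the hypothesis), and adjunction again. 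Since this bijection holds for all compact $C$ and ${\cal S}$ is compactly generated, the canonical map is an isomorphism.

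For the converse direction, I would let $C\in{\cal S}$ be compact and aim to show that $F(C)$ is compact in ${\cal T}$. For any family $\{Y_i\}$ in ${\cal T}$, the same chain of identities read in the analogous order,
\[ \Hom_{\cal T}\bigl(F(C),\bigoplus_i Y_i\bigr)=\Hom_{\cal S}\bigl(C,G(\bigoplus_i Y_i)\bigr)=\Hom_{\cal S}\bigl(C,\bigoplus_i G(Y_i)\bigr)=\bigoplus_i\Hom_{\cal S}(C,G(Y_i))=\bigoplus_i\Hom_{\cal T}(F(C),Y_i), \]
uses adjunction, the hypothesis that $G$ preserves direct sums, compactness of $C$, and adjunction. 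This exhibits $F(C)$ as compact.

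There is no real obstacle here: everything is a formal consequence of the adjunction and of compact generation of ${\cal S}$. The only subtle point worth flagging is the appeal to the fact that an object-level map is an isomorphism as soon as it induces bijections on $\Hom(C,-)$ for all compact $C$; I would simply cite Neeman's compact-generation machinery for this.
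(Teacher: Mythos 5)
Your proof is correct, and it is the standard argument (essentially Neeman's own, from the reference the paper cites for this theorem); the paper itself gives no proof, simply quoting the result from \cite{Neemanduality} and \cite{BDS}. Both directions are the formal adjunction/compactness computation you describe, with the only non-formal input being exactly the point you flag: that in a compactly generated category a morphism inducing bijections on $\Hom(C,-)$ for all compact $C$ is an isomorphism.
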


The following lemma applies to the three
subcategories of $DM(k;R)$ considered in this paper: mixed Tate motives,
effective motives, and (non-effective) motives of dimension $\leq 0$.

\begin{lemma}
\label{tworight}
Let ${\cal T}$ be a compactly generated triangulated category,
and let ${\cal S}$ be the smallest localizing subcategory
of ${\cal T}$ that contains a given set of compact objects
in ${\cal T}$. Then the inclusion $f^*$ of ${\cal S}$
into ${\cal T}$ has a right adjoint $f_*$. Moreover, $f_*$ also has a right
adjoint $f^{(1)}\colon {\cal S} \arrow {\cal T}$:
$$f^*\dashv f_* \dashv f^{(1)}$$
\end{lemma}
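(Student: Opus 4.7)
The plan is to deduce both adjunctions formally from the Brown representability results in Theorems \ref{adjointcriterion} and \ref{compact}, after first checking that ${\cal S}$ is itself compactly generated, with the given set ${\cal P}$ as compact generators.

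Step 1. Each $P\in {\cal P}$ remains compact when regarded as an object of ${\cal S}$, since ${\cal S}$ is a full subcategory of ${\cal T}$ that is closed under the direct sums of ${\cal T}$; so the defining isomorphism $\bigoplus \Hom(P,X_i)\cong \Hom(P,\bigoplus X_i)$ is the same whether computed in ${\cal S}$ or in ${\cal T}$. That ${\cal P}$ compactly generates ${\cal S}$ is the standard consequence: the right orthogonal ${\cal P}^\perp$ in ${\cal S}$ is a localizing subcategory, and its intersection with the localizing subcategory generated by ${\cal P}$---which is all of ${\cal S}$---is zero.

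Step 2. The inclusion $f^*\colon {\cal S}\arrow {\cal T}$ preserves arbitrary direct sums automatically, since ${\cal S}$ is closed under the direct sums of ${\cal T}$. Applying Theorem \ref{adjointcriterion}(1) with source ${\cal S}$ produces the right adjoint $f_*\colon {\cal T}\arrow {\cal S}$.

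Step 3. I would then apply Theorem \ref{adjointcriterion}(1) once more, now to $f_*$ with source ${\cal T}$, to produce $f^{(1)}$. This requires showing that $f_*$ preserves direct sums, which by Theorem \ref{compact} is equivalent to showing that $f^*$ sends compact objects to compact objects---that is, every compact object of ${\cal S}$ is compact in ${\cal T}$. I expect this to be the only nonformal step. The argument is to apply Theorem \ref{compactgens} to ${\cal S}$ with generator set ${\cal P}$: each compact object of ${\cal S}$ lies in the smallest thick subcategory of ${\cal S}$ containing ${\cal P}$. Since ${\cal S}$ is closed under direct summands in ${\cal T}$ (any localizing subcategory is thick, by the Eilenberg swindle), this smallest thick subcategory of ${\cal S}$ is a thick subcategory of ${\cal T}$ as well, and so is contained in the smallest thick subcategory of ${\cal T}$ generated by ${\cal P}$. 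The latter consists entirely of compact objects of ${\cal T}$, since compact objects in ${\cal T}$ form a thick subcategory and ${\cal P}$ is compact by hypothesis. This establishes the compactness transfer and yields the desired adjoint $f^{(1)}$.
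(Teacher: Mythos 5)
Your proof is correct and follows essentially the same route as the paper: show that ${\cal P}$ compactly generates ${\cal S}$, obtain $f_*$ from Brown representability because $f^*$ preserves direct sums, and obtain $f^{(1)}$ by checking that $f^*$ preserves compactness so that $f_*$ preserves direct sums. One small slip in Step 3: from ``the thick closure of ${\cal P}$ in ${\cal S}$ is a thick subcategory of ${\cal T}$ containing ${\cal P}$'' one concludes that it \emph{contains} the smallest thick subcategory of ${\cal T}$ generated by ${\cal P}$, not that it is contained in it; the containment you actually need follows instead by observing that the compact objects of ${\cal T}$ form a thick subcategory containing ${\cal P}$, whose intersection with ${\cal S}$ is a thick subcategory of ${\cal S}$ containing ${\cal P}$ and hence contains every compact object of ${\cal S}$.
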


The fact that $f_*$ exists means that
for every object $A$ of ${\cal T}$ there is an object
$B$ of ${\cal S}$ and a morphism $B\arrow A$ that is universal
for maps from objects of ${\cal S}$ to $A$. This is often
a useful construction. In this paper, we ask (in various examples)
whether the inclusion $f^*$ of ${\cal S}$ into ${\cal T}$ also has a left
adjoint $f_{(1)}$. Equivalently, for every object $A$ in ${\cal T}$,
is there an object $B$ of ${\cal S}$ with a map $A\arrow B$
that is universal for maps from $A$ to objects of ${\cal S}$?

The notation $f^{(1)}$
was suggested by Balmer, Dell'Ambrogio, and Sanders
\cite[Corollary 2.14]{BDS}.

\begin{proof}
(Lemma \ref{tworight}) First, because ${\cal S}$ is compactly
generated and the inclusion $f^*$ from ${\cal S}$
to ${\cal T}$ preserves arbitrary direct sums, $f^*$ has a right
adjoint, by Theorem \ref{adjointcriterion}. Next, we use that
the given generators for ${\cal S}$
are compact in ${\cal T}$. It follows that $f^*$ takes compact
objects in ${\cal S}$ to compact objects in ${\cal T}$. Since
${\cal S}$ is compactly generated, it follows that
$f_*$ preserves arbitrary direct sums, by Theorem \ref{compact}.
Since ${\cal T}$ is compactly generated,
Theorem \ref{adjointcriterion} gives
that $f_*$ also has a right adjoint $f^{(1)}$.
\end{proof}

The subcategory $DMT(k;R)$ of $DM(k;R)$ is {\it rigidly-compactly
generated}, unlike $DM_{\eff}(k;R)$ and $D_0(k;R)$. This means
that $DMT(k;R)$ is a tensor-triangulated category;
it has arbitrary direct sums; its compact objects
coincide with the rigid objects (also called the strongly dualizable
objects); and $DMT(k;R)$ is generated by a set of compact
objects. (The key point in checking this
is that the duals in $DM(k;R)$ of the given generators $R(j)$
for $DMT(k;R)$, for integers $j$, are again in $DMT(k;R)$.)

For a tensor exact functor between
rigidly-compactly generated categories that preserves arbitrary
direct sums, Balmer, Dell'Ambrogio,
and Sanders showed that the sequence of adjoint functors 
in Lemma \ref{tworight} extends one step to the left if and only if it
extends one step to the right \cite[Theorem 3.3]{BDS}. In particular:

\begin{theorem}
\label{five}
Let $k$ be a field and $R$ a commutative ring in which the
exponential characteristic of $k$ is invertible. Then
the inclusion $f^*$ of $DMT(k;R)$ into $DM(k;R)$ has a left
adjoint if and only if it has a three-fold right adjoint
(meaning that $f^{(1)}$ above has a right adjoint).
\end{theorem}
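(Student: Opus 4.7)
The plan is to apply the Balmer--Dell'Ambrogio--Sanders theorem \cite[Theorem 3.3]{BDS}, which has just been recalled in the excerpt: for a tensor exact functor between rigidly-compactly generated tensor triangulated categories that preserves arbitrary direct sums, the sequence of adjoints in Lemma \ref{tworight} extends one step to the left precisely when it extends one step to the right. The statement of Theorem \ref{five} is the specialization of this criterion to $f^*\colon DMT(k;R)\arrow DM(k;R)$, so the entire task is to check that $f^*$ satisfies those hypotheses.

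First I would verify that $DM(k;R)$ is rigidly-compactly generated. It is compactly generated by the motives $M(X)(a)$ for $X$ smooth projective over $k$ and $a\in\Z$, as cited in the Notation section. Each such generator is strongly dualizable: for $X$ smooth projective of pure dimension $d$, the dual of $M(X)$ in $DM(k;R)$ is $M(X)(-d)[-2d]$, which is again of the same form. Hence the compact generators can be taken to be rigid, so $DM(k;R)$ is rigidly-compactly generated. The paper has already observed that $DMT(k;R)$ is rigidly-compactly generated, the key point being that the duals $R(-j)$ of the generators $R(j)$ remain in $DMT(k;R)$.

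Next I would check that $f^*$ is a tensor exact functor preserving arbitrary direct sums. Since $DMT(k;R)$ is a localizing subcategory of $DM(k;R)$, the inclusion automatically preserves direct sums. The tensor product on $DMT(k;R)$ is induced from $DM(k;R)$ and is well-defined there because $R(a)\otimes R(b)\cong R(a+b)$, so $f^*$ is strong symmetric monoidal, and in particular tensor exact.

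With the hypotheses of \cite[Theorem 3.3]{BDS} verified, that theorem immediately yields that $f^*$ has a left adjoint if and only if $f_*$ has a right adjoint of its own right adjoint, i.e., if and only if $f^{(1)}$ has a right adjoint. There is no real obstacle in the argument; the only point requiring attention is making sure that the proposed rigid generators of $DM(k;R)$ and $DMT(k;R)$ genuinely lie in the respective subcategories, which is immediate in both cases.
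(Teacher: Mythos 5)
Your proposal is correct and follows exactly the paper's route: the paper likewise derives Theorem \ref{five} as an immediate specialization of \cite[Theorem 3.3]{BDS}, with the only substantive check being that $DMT(k;R)$ (and $DM(k;R)$) are rigidly-compactly generated because the duals of the compact generators stay in the subcategory. Your added remark that the dual of $M(X)$ is $M(X)(-d)[-2d]$ for $X$ smooth projective of dimension $d$ just makes explicit a point the paper leaves implicit.
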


\section{The Chow groups of a mixed Tate scheme}

Let $X$ be a scheme of finite type over a field $k$ such that
the compactly supported motive $M^c(X)$ is mixed Tate.
This implies the {\it weak Chow K\"unneth property }that
the Chow groups of $X$ do not increase when the base field $k$
is enlarged \cite[section 6]{Totaromotive}.
However, that leaves open the question
of how big the Chow groups of $X$ are. Note that more general
motivic homology
groups of a mixed Tate scheme $X$ over $k$
need not be finitely generated abelian groups,
as shown by the case $X=\Spec(k)$. (For example,
$H_{-1}(k,\Z(-1))\cong k^*$.)

In this section, we show that for a scheme $X$ of finite type
over a field $k$ such that $M^c(X)$ is mixed Tate in $DM(k;R)$,
the Chow groups $CH_*(X;R)$ are finitely generated $R$-modules.
This was known for the simplest examples of mixed Tate schemes,
{\it linear schemes }over $k$
in the sense of \cite{Totarolinear}. On the other
hand, there are mixed Tate varieties that are not linear schemes
or even rational, for example some Barlow surfaces of general
type \cite[Proposition 1.9]{ACP}, \cite[after Theorem 4.1]{Totaromotive}.

It is natural to ask a stronger question.
Let $X$ be a scheme of finite type that has the weak Chow K\"unneth
property with $R$ coefficients, meaning that $CH_*(X;R)\arrow
CH_*(X_E;R)$ is surjective for all finitely generated fields $E$
over $k$, or equivalently for all fields $E$ over $k$.
Are the Chow groups $CH_*(X;R)$ finitely generated
$R$-modules? 
The answer is yes for $X$ smooth proper over $k$
\cite[Theorem 4.1]{Totaromotive}, but the general question
remains open.

\begin{theorem}
\label{mixedTatefinite}
Let $k$ be a field and $R$ a commutative ring such that the exponential
characteristic of $k$ is invertible in $R$. Let $X$ be a scheme
of finite type over $k$. If $M^c(X)$ is mixed Tate in $DM(k;R)$,
then the Chow groups $CH_*(X;R)$ are finitely generated $R$-modules.
\end{theorem}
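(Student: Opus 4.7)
The plan is to combine Theorem \ref{compactgens} with Bondarko's Chow weight structure on $DM(k;R)$. Since $X$ is of finite type, $M^c(X)$ is a compact object of $DM(k;R)$, and by hypothesis it lies in $DMT(k;R)$. Because $DMT(k;R)$ is compactly generated by the Tate motives $R(j)$, $j \in \Z$, and $M^c(X)$ is still compact inside $DMT(k;R)$, Theorem \ref{compactgens} places $M^c(X)$ in the smallest thick subcategory of $DMT(k;R)$ generated by $\{R(j) : j \in \Z\}$.

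Next I would invoke the Chow weight structure on $DM(k;R)$, whose heart is (the Karoubian closure of) the category of Chow motives with $R$-coefficients. Any compact object has bounded weights, so $M^c(X)$ carries a finite weight filtration, and mixed Tateness forces every graded piece to lie in the intersection of the Chow heart with $DMT(k;R)$. Such an object is a direct summand of a finite direct sum of pure Tate Chow motives $R(j)[2j]$. Applying $\Hom(R(i)[2i], -)$ to such a pure piece gives
$$\Hom(R(i)[2i], R(j)[2j]) \;=\; H^{2(j-i)}(k, R(j-i)) \;=\; CH^{j-i}(k;R),$$
which equals $R$ when $j=i$ and vanishes otherwise, using $CH^q(k;R)=0$ for $q>0$ (because $k$ is a field) and the vanishing of motivic cohomology in negative weights. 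Thus each weight piece contributes a free finitely generated $R$-module to $CH_i(X;R)$, and the long exact sequences attached to the finitely many weight triangles exhibit $CH_i(X;R) = \Hom(R(i)[2i], M^c(X))$ as an iterated extension of finitely generated $R$-modules, hence finitely generated.

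The principal obstacle is the input from the Chow weight structure: one must know that it exists on $DM(k;R)$ in the generality demanded by the theorem (any commutative $R$ in which the exponential characteristic of $k$ is invertible), that compact objects have bounded weights, and that a mixed Tate object in the Chow heart is a summand of a finite sum of $R(j)[2j]$. An alternative, less structural route would be a direct induction on the ``cellular complexity'' of $M^c(X)$ within the thick subcategory produced by Theorem \ref{compactgens}, exploiting the vanishing $H_j(M^c(X),R(i))=0$ for $j<2i$ (noted earlier in this section) to rule out the potentially non-finitely-generated contributions from Tate shifts $R(j)[n]$ with $n\neq 2j$. In either approach, the crucial point is that only pieces of the form $R(j)[2j]$ survive in the Chow-diagonal bidegree, where $H^p(k,R(q))$ for $p=2q$, $q=0$ is just $R$.
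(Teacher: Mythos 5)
Your weight-structure route is genuinely different from the paper's argument, but as written it has a gap that looks fatal. You only compute $\Hom(R(i)[2i],R(j)[2j])$, i.e.\ the shift-zero Hom groups between heart objects, whereas the weight Postnikov tower of $M^c(X)$ presents it as an iterated extension of objects $P_m[n_m]$ with $P_m$ in the heart and $n_m\neq 0$ in general. The long exact sequences therefore involve $\Hom(R(i)[2i],R(j)[2j][n])=H^{2(j-i)+n}(k,R(j-i))$ for $n\neq 0$, and for $n<0$ these are motivic cohomology groups of the base field such as $H^1(k,R(1))\cong k^*\otimes R$ or $H^2(k,R(2))\cong K_2^M(k)\otimes R$, which are not finitely generated for most $k$. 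Concretely, $R(1)[1]=(R(1)[2])[-1]$ is a compact mixed Tate motive, pure of a single weight with graded piece $R(1)[2]$, yet $\Hom(R(0),R(1)[1])=k^*\otimes R$; so an argument using only ``compact, mixed Tate, bounded weights with Tate graded pieces'' proves a false statement. What saves $M^c(X)$ is that its weights are concentrated on one side (the pieces occur with shifts $[n_m]$, $n_m\geq 0$, because $M^c$ of an open subscheme is an extension of $M^c$ of a compactification and a shift $[1]$ of $M^c$ of the boundary), combined with the vanishing $\Hom(R(i)[2i],P[n])=0$ for $n>0$ and $P$ in the Tate part of the heart; this yields a surjection from $\Hom(R(i)[2i],P_0)$ onto $CH_i(X;R)$. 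You would need to state and prove this one-sidedness, in addition to the inputs you yourself flag but do not supply: existence of the Chow weight structure for arbitrary commutative $R$ inverting the exponential characteristic, boundedness for compact objects, its restriction to compact mixed Tate motives, and the identification of the mixed Tate part of the heart. Note also that the conclusion must come as a quotient of a finitely generated module, not as an iterated extension of subquotients, since $R$ is not assumed noetherian.

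The paper avoids weight structures entirely. It builds an explicit tower $N_0=M^c(X)\arrow N_1\arrow\cdots$ by coning off (possibly infinite) direct sums $F_b$ of Tate motives generating the motivic homology; the vanishing $H_j(X,R(i))=0$ for $j<2i$ propagates up the tower so that $F_b$ involves only $R(i)[j]$ with $2i-j\leq -b$, which is exactly the bidegree control that kills the $k^*\otimes R$--type contributions. A homotopy colimit argument shows the tower stabilizes after finitely many steps, so $M^c(X)$ is a summand of an iterated extension of the $F_b$, and a separate compactness argument (Lemma \ref{subsum}) replaces each infinite sum $F_b$ by a finite subsum, giving a surjection $CH_*(F_0')\arrow CH_*(X;R)$ from a finitely generated free $R$-module. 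Your closing sketch of a ``direct induction on cellular complexity'' points in this direction, but Theorem \ref{compactgens} by itself gives no control on which twists $R(i)[j]$ occur in the finite cell structure; the tower construction and the finiteness lemma are the two ingredients that make that route work, and neither appears in your proposal.
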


\begin{proof}
The object $M^c(X)$ is compact in $DM(k;R)$. Since we assume that
$M^c(X)$ is also mixed Tate (that is, $M^c(X)$ is
in the localizing subcategory
generated by the objects $R(i)$ for integers $i$), it is in fact
in the smallest thick subcategory of $DM(k;R)$ that contains
$R(i)$ for all integers $i$, by Theorem \ref{compactgens}.
In order to see that $X$ has finitely
generated Chow groups, we will analyze which
motives $R(i)[j]$ are needed to construct $M^c(X)$.

Let $N_0=N=M^c(X)$. Consider the following sequence of mixed Tate motives
$N_a$ for $a\geq 0$. Given $N_a$, choose
a set of generators for the motivic homology of $N_a$ as a module
over the motivic homology of $k$. Let $F_a$ be the corresponding direct
sum (possibly infinite) of objects $R(i)[j]$ together with a map
$F_a\arrow N_a$ that induces a surjection on motivic homology.
Let $N_{a+1}$ be a cone of the map $F_a\arrow N_a$. This defines
a sequence of mixed Tate motives $N_0\arrow N_1\arrow \cdots$.

By construction, the homotopy colimit $\hocolim(N_a)$ has zero
motivic homology groups. Since $\hocolim(N_a)$ is a mixed Tate motive,
it follows that $\hocolim(N_a)=0$ (by another of Neeman's results;
see \cite[Corollary 5.3]{Totaromotive}). So
$$0=\Hom(N,\hocolim(N_a))=\varinjlim \Hom(N,N_a).$$
So there is a natural number $a$ such that the composition
$N=N_0\arrow N_1\arrow \cdots \arrow N_a$ is zero. By construction,
the fiber $Y$ of $N=N_0\arrow N_a$ is an iterated
extension of direct sums
of Tate motives, $F_0,\ldots,F_{a-1}$. Since the map $N\arrow N_a$
is zero, $Y$ is isomorphic to $N\oplus N_a[-1]$. Thus $N$ is a summand
of the extension $Y$.

The following lemma formalizes an argument by Neeman
\cite[proof of Lemma 2.3]{Neemansmash}. We say that an object $Y$
in a triangulated category is an {\it iterated extension }of objects
$F_0,\ldots,F_{a-1}$ if there is a map $f_0\colon F_0\arrow Y$,
a map $f_1$ from $F_1$
to the cone of $f_0$, and so on, with the cone of $f_{a-1}$ being zero.

\begin{lemma}
\label{subsum}
Let ${\cal T}$ be a triangulated category with arbitrary direct
sums. Let $N$ be a compact object in ${\cal T}$ which is
a summand of an iterated extension $Y$ of
(possibly infinite) direct sums $F_0,\ldots,F_{a-1}$ of compact objects.
Then $N$ is a summand of an iterated extension
$Y'$ of objects $F_0',\ldots,F_{a-1}'$, with each $F_b'$
a finite direct sum of some of the summands of $F_b$.
\end{lemma}

\begin{proof}
To make an induction, we prove a more general statement.
Let $N$ be a compact object in ${\cal T}$ with a morphism
to an object $Y$, and let $Y'\arrow Y$ be a morphism
whose cone is an iterated extension of direct sums $F_0,\ldots,
F_{a-1}$ of compact objects in ${\cal T}$. Then there is an object $N'$
and a map $N'\arrow N$ such that the composite $N'\arrow N\arrow Y$
factors through $Y'$, and the cone of $N'\arrow N$
is an iterated extension of objects $F_0',\ldots,F_{a-1}'$,
with each $F_b'$
a finite direct sum of some of the given summands of $F_b$.
For $Y'=0$, this gives the statement of the lemma.

The proof is by induction on the number $a$. If $a=1$, then
the cone $F=F_0$ of $Y'\arrow Y$ is a direct sum of compact objects.
Since $N$ is compact, the composition $N\arrow Y\arrow F$
factors through a finite direct sum $F'$ of the given summands of $F$.
Then we can complete the commutative square
$$\xymatrix@C-10pt@R-10pt{
N \ar[r]\ar[d] & F'\ar[d]\\
Y \ar[r] & F
}$$
to a map of triangles
$$\xymatrix@C-10pt@R-10pt{
N'\ar[r]\ar[d] & N \ar[r]\ar[d] & F'\ar[r]\ar[d] & N'[1]\ar[d]\\
Y'\ar[r] & Y \ar[r] & F\ar[r] & Y'[1].
}$$
Thus the cone of $N'\arrow N$ is a finite direct sum $F'$
of the given summands of $F=F_0$, and the composite
$N'\arrow N\arrow Y$ factors through $Y'$, as we want.

Now suppose that $a>1$. Then we can factor the map $Y'\arrow Y$ (with cone
an extension of $F_0,\ldots,F_{a-1}$)
as $Y'\arrow Y''\arrow Y$ such that the cone of $Y'\arrow Y''$ is an
extension of $F_0,\ldots,F_{a-2}$ and the cone of $Y''\arrow Y$ is $F_{a-1}$.
By the case $a=1$ of the induction, there is a map $N''\arrow N$ with
cone a finite subsum $F_{a-1}'$ of $F_{a-1}$
such that $N''\arrow N\arrow Y$
factors through $Y''$. Then $N''$ is compact. By induction on $a$,
there is a map $N'\arrow N''$ with cone
an extension of finite subsums $F_0',\ldots,F_{a-2}'$
of the direct sums $F_0,\ldots,F_{a-2}$
such that $N'\arrow N''\arrow Y''$ factors through $Y'$. Then we have
a commutative diagram
$$\xymatrix@C-10pt@R-10pt{
N'\ar[r]\ar[d] & N'' \ar[r]\ar[d] & N\ar[d] \\
Y'\ar[r] & Y'' \ar[r] & Y,
}$$
which shows that the composite $N'\arrow N\arrow Y$ factors through
$Y'$. Finally, the cone of $N'\arrow N$ is an extension
of $F_0',\ldots,F_{a-1}'$, by the octahedral axiom.
\end{proof}

We showed above that $N=M^c(X)$ is a summand of an extension
$F_0,\ldots,F_{a-1}$ of direct sums of Tate motives.
Since $N$ is compact, 
Lemma \ref{subsum} gives that $N$ is a summand of an
extension $Y'$ of {\it finite }direct sums $F_0',\ldots, 
F_{a-1}'$ of Tate motives,
where each $F_b'$ is the direct sum of finitely many
of the Tate motives that occur in $F_b$.

We now use that for a scheme $X$ of finite type over $k$, the motivic
homology $H_j(X,R(i))$ vanishes unless $2i\leq j$, by section \ref{notation}.
As a result, we can take $F_0$ to be a direct sum of objects
$R(i)[j]$ with $2i\leq j$. Since $N_1$ is a cone of the morphism
$F_0\arrow N_0$ which induces a surjection on motivic homology,
we have an exact sequence of motivic homology groups:
$$\xymatrix@C-10pt@R-10pt{
H_j(N_0,R(i)) \ar[r]_0 & H_j(N_1,R(i)) \ar[r] & H_{j-1}(F_0,R(i)).
}$$
We read off that $N_1$ has a stronger vanishing property than $N_0$ does:
$H_j(N_1,R(i))$ is zero unless $2i-j\leq -1$. Repeating the argument,
we find that each $F_b$ can be chosen to be a direct sum of Tate motives
$R(i)[j]$ with $2i-j\leq -b$.

Therefore, each $F_b'$ is a {\it finite }direct sum
of Tate motives
$R(i)[j]$ with $2i-j\leq -b$. Since $N=M^c(X)$ is a summand of 
the extension $Y'$ of $F_0',\ldots,F_{a-1}'$, we read off that
$CH_*(F_0')\arrow CH_*(X;R)$ is surjective, and that $CH_*(F_0')$
is a finitely generated free $R$-module. Thus the $R$-module
$CH_*(X;R)$ is finitely
generated.
\end{proof}

The same argument gives the following variant. The right adjoint
$f_*$ to the inclusion of $DMT(k;R)$ into $DM(k;R)$ is also
called {\it colocalization }with respect to mixed Tate motives,
$N\mapsto C(N)$.

\begin{theorem}
\label{colocalcompact}
Let $k$ be a field and $R$ a commutative ring such that the exponential
characteristic of $k$ is invertible in $R$. Let $X$ be a scheme
of finite type over $k$. If the colocalization $C(M^c(X))$ 
in $DMT(k;R)$ is compact,
then the Chow groups $CH_*(X;R)$ are finitely generated $R$-modules.
\end{theorem}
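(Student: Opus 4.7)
The plan is to run the same construction as in the proof of Theorem \ref{mixedTatefinite}, but entirely inside $DMT(k;R)$ and applied to $N := C(M^c(X))$ rather than to $M^c(X)$ itself. The crucial input is the adjunction: since $f^* \dashv f_*$ and $R(i)[j] \in DMT(k;R)$, the canonical map $C(M^c(X)) \to M^c(X)$ induces an isomorphism on motivic homology, so $H_j(N, R(i)) \cong H_j(X, R(i))$. In particular this group vanishes unless $2i \le j$, exactly as in the original argument.

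Next I would verify that $N$ lies in the smallest thick subcategory of $DMT(k;R)$ containing the $R(i)$: by hypothesis $N$ is compact in $DMT(k;R)$, which is compactly generated by the $R(i)$, so Theorem \ref{compactgens} applies. Now I would repeat the inductive construction from the previous proof verbatim inside $DMT(k;R)$: set $N_0 = N$, choose a (possibly infinite) direct sum $F_a$ of Tate motives $R(i)[j]$ surjecting onto the motivic homology of $N_a$, and let $N_{a+1}$ be the cone. Each $N_a$ and $F_a$ lies in $DMT(k;R)$ (a localizing subcategory), so $\hocolim(N_a)$ is a mixed Tate motive; since it has vanishing motivic homology, Neeman's result forces it to be zero. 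Compactness of $N$ in $DMT(k;R)$ then gives $\varinjlim \Hom(N, N_a) = 0$, so the composite $N \to N_a$ is zero for some $a$, and its fiber $Y \cong N \oplus N_a[-1]$ exhibits $N$ as a summand of an iterated extension of $F_0, \ldots, F_{a-1}$.

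At this point I would apply Lemma \ref{subsum} in the ambient triangulated category $DMT(k;R)$, which has arbitrary direct sums and in which $N$ is compact, to replace $Y$ by an iterated extension $Y'$ of finite subsums $F_0', \ldots, F_{a-1}'$ still having $N$ as a summand. The vanishing observation from the previous proof applies unchanged: since $H_j(N_0, R(i))$ vanishes for $j < 2i$, each $F_b$ (and hence $F_b'$) can be chosen to be a finite direct sum of Tate motives $R(i)[j]$ with $2i - j \le -b$. Consequently only $F_0'$ contributes to Chow groups, and $CH_*(F_0') \twoheadrightarrow CH_*(N) \cong CH_*(X;R)$ exhibits the target as a quotient of a finitely generated free $R$-module.

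I do not anticipate a genuine obstacle: once one checks that compactness in $DMT(k;R)$ (as opposed to in $DM(k;R)$) is all the previous argument actually uses, and that the motivic homology of $C(M^c(X))$ inherits the requisite vanishing from that of $X$, the remainder is a line-by-line transcription. The only point worth care is ensuring that every stage of the construction (choice of $F_a$, the cones $N_a$, the homotopy colimit, and Lemma \ref{subsum}) is carried out inside $DMT(k;R)$, so that Neeman's vanishing criterion for mixed Tate motives is available at the step where we need $\hocolim(N_a) = 0$.
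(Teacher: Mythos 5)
Your proposal is correct and is exactly the argument the paper intends: it introduces Theorem \ref{colocalcompact} with ``the same argument gives the following variant,'' namely rerunning the proof of Theorem \ref{mixedTatefinite} with $N=C(M^c(X))$, using compactness in $DMT(k;R)$ in place of compactness in $DM(k;R)$ and the adjunction isomorphism $H_*(C(M^c(X)),R(*))\cong H_*(X,R(*))$ to get the vanishing range. You have correctly identified the only two points that need checking, so there is nothing to add.
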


\section{Products of mixed Tate motives}
\label{products}

\begin{theorem}
\label{chow}
Let $k$ be a field and $R$ a commutative ring. If the product
$\prod_{m=1}^{\infty} R(0)$ in $DM(k;R)$ is mixed Tate,
then the abelian group $CH_i(Y;R)$ is finitely generated
for every smooth projective variety $Y$ over $k$ and every integer $i$.
\end{theorem}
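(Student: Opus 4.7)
The plan is to adapt the proof of Theorem \ref{mixedTatefinite} by packaging an entire countable sequence of Chow classes into a single morphism to a mixed Tate object, and then running the resolution-and-factorization argument used there. I would fix a smooth projective $Y$ of dimension $n$ and an integer $i$, and use the duality identification $CH_i(Y;R)\cong CH^{n-i}(Y;R)=\Hom(M(Y),R(n-i)[2(n-i)])$ to turn any sequence $(\alpha_m)_{m\ge 1}$ of classes into a single morphism $g\colon M(Y)\arrow Q$, where $Q:=P(n-i)[2(n-i)]$ is mixed Tate by hypothesis (twisting and shifting preserve $DMT(k;R)$).

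Next I would run the resolution procedure of Theorem \ref{mixedTatefinite} on $Q$: set $Q_0=Q$, choose a direct sum $F_a$ of Tate motives together with a map $F_a\arrow Q_a$ surjective on motivic homology over $H_*(k,R)$, and put $Q_{a+1}=\text{cone}(F_a\arrow Q_a)$. Since every $Q_a$ is mixed Tate and $\hocolim Q_a$ has vanishing motivic homology, $\hocolim Q_a=0$. Compactness of $M(Y)$ then gives $\Hom(M(Y),\hocolim Q_a)=\varinjlim\Hom(M(Y),Q_a)=0$, so $g$ dies in $Q_a$ for some $a$ and lifts to the fiber of $Q\arrow Q_a$, an iterated extension of $F_0,\ldots,F_{a-1}$. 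Lemma \ref{subsum} then produces a compact mixed Tate $C$---a finite iterated extension of finite subsums $F_0',\ldots,F_{a-1}'$---together with a factorization $g=h\circ q$, where $q\colon M(Y)\arrow C$ and $h\colon C\arrow Q$. Each individual class becomes $\alpha_m=h_m\circ q$ with $h_m=\pi_m\circ h\in\Hom(C,R(n-i)[2(n-i)])$.

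The heart of the argument will then be a bidegree bookkeeping analogous to the end of the proof of Theorem \ref{mixedTatefinite}. As in that proof, induction on $a$ shows that each $F_b$ can be taken to consist of Tate motives $R(k)[\ell]$ with $\ell-2k\ge b$; combining this bound with the vanishing range of the motivic cohomology of a field, I would argue that the only summands of $F_0'$ producing classes in the target bidegree $(2(n-i),n-i)$ of $CH^{n-i}(Y;R)$ via the map $q^*\colon\Hom(C,R(n-i)[2(n-i)])\arrow CH^{n-i}(Y;R)$ are copies of $R(n-i)[2(n-i)]$ itself, yielding a finitely generated free $R$-module as image. Consequently any sequence $(\alpha_m)$ lies in the finitely generated abelian subgroup $q^*(\Hom(C,R(n-i)[2(n-i)]))$; if $CH_i(Y;R)$ were not finitely generated, one could choose $(\alpha_m)$ to escape every finitely generated subgroup, a contradiction.

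The main obstacle will be this last bidegree analysis. Because here one computes Hom \emph{out of} $C$ rather than into $M^c(X)$ as in Theorem \ref{mixedTatefinite}, the cancellations that forced the contributing indices to be $b=0$, $i_s=i$, $j_s=2i$ there must be rewritten for Hom into $R(n-i)[2(n-i)]$, and one must check carefully that contributions from higher $F_b'$'s and from off-diagonal Tate summands of $F_0'$ do not survive into the image in $CH^{n-i}(Y;R)$.
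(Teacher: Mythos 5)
Your proof is correct, but it takes a genuinely different route from the paper's. The paper never resolves the product by Tate motives: it applies the Dugger--Isaksen K\"unneth spectral sequence to $P\otimes M(Y)$ (legitimate precisely because $P$ is assumed mixed Tate), identifies $P\otimes M(Y)$ with $\prod_m M(Y)$ via strong dualizability of $M(Y)$, notes that the vanishing range of $H_*(P,R(*))$ places the Chow bidegree at a corner of the $E_2$-page with no differentials, and reads off that $(\prod_m R)\otimes_R CH_*(Y;R)\arrow\prod_m CH_*(Y;R)$ is surjective, which forces finite generation. You instead dualize first and rerun the cell-structure argument of Theorem \ref{mixedTatefinite} on the non-compact object $Q$, exploiting compactness of the source $M(Y)$ rather than of the target. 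The obstacle you flag does resolve, but note that two different vanishing statements are doing the work: (i) every Tate summand $R(\beta)[\gamma]$ of $F_b$ with $b\geq 1$ has $\gamma-2\beta\geq 1$, and $\Hom(M(Y),R(\beta)[\gamma])=H^{\gamma}(Y,R(\beta))=CH^{\beta}(Y,2\beta-\gamma;R)=0$ whenever $\gamma>2\beta$ --- this is vanishing of motivic cohomology of $Y$, not of $k$; hence $\Hom(M(Y),\mathrm{cone}(F_0'\arrow C))=0$, so $q$ factors through some $q'\colon M(Y)\arrow F_0'$ and the image of $q^*$ is contained in that of $(q')^*$; (ii) for the summands of $F_0'$, either $\gamma>2\beta$ (so the corresponding component of $q'$ vanishes as in (i)) or $\gamma=2\beta$ with $\beta<c:=n-i$, in which case $\Hom(R(\beta)[2\beta],R(c)[2c])=CH^{c-\beta}(\Spec k;R)=0$; only the $R(c)[2c]$-summands survive, so the image of $q^*$ is generated by the finitely many corresponding components of $q'$ (finitely generated, though not necessarily free as you write). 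Two minor points: the factorization of $g$ through a finite subextension $C$ uses the stronger relative statement proved inside Lemma \ref{subsum} rather than the lemma as stated (which concerns summands), and your closing step --- a module all of whose countable subsets lie in finitely generated submodules is finitely generated --- is identical to the paper's closing step and is cleanest for $R$ noetherian, which covers all the applications. The paper's argument is shorter and yields surjectivity of the K\"unneth map as extra information; yours is more self-contained, reusing only the machinery already set up for Theorem \ref{mixedTatefinite} and avoiding the K\"unneth spectral sequence altogether.
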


\begin{proof}
Suppose that $P:=\prod_{m=1}^{\infty} R(0)$ in $DM(k;R)$
is mixed Tate. That implies that for every smooth projective
variety $Y$ over $k$, Dugger-Isaksen's K\"unneth spectral sequence
$$E_2^{pq}=\Tor^{H_*(k,R(*))}_{-p,-q,j}(H_*(P,R(*)),H_*(Y,R(*)))\imp
H_{-p-q}(P\otimes M(Y),R(j))$$
converges to the motivic
homology of $P\otimes M(Y)$
\cite[Proposition 7.10]{DI}. Here,
for bigraded modules $M$ and $N$ over a bigraded ring $S$,
$\Tor^S_{a,i,j}(M,N)$ denotes the $(i,j)$th bigraded piece
of $\Tor^S_a(M,N)$. For this purpose, the group
$H_i^M(X,R(j))$ has bigrading $(i,j)$.

Next, $P\otimes M(Y)$
is isomorphic to $\prod_{m=1}^{\infty} M(Y)$.
(To prove that, use that $M(Y)$ is strongly dualizable
in $DM(k;R)$ (a reference is \cite[Lemma 5.5]{Totaromotive}),
and check that the abelian group of maps from any object $W$
in $DM(k;R)$ to $P\otimes M(Y)$
can be identified with the group of maps
from $W$ to $\prod_{m=1}^{\infty} M(Y)$.)

The motivic homology of $P$ is (trivially) the product
of infinitely many copies of the motivic homology
of $R(0)$. (In particular, $H_i(P,R(j)) = 0$ unless $i\geq 2j$
and $i\geq j$ and $j\leq 0$,
just as we would have for a 0-dimensional variety.)
As a result, the K\"unneth
spectral sequence with $R(j)$ coefficients
is concentrated in columns $\leq 0$ and rows $\leq -2j$.
If we write $H_*(P)$ for the bigraded group $H_*(P,R(*))$, the $E_2$
term looks like:
$$\xymatrix@C-10pt@R-10pt{
0 & 0 & 0 & 0\\
[\Tor_2^{H_*k}(H_*P,H_*Y)]_{2j,j} \ar[rrd] & [\Tor_1^{H_*k}
(H_*P,H_*Y)]_{2j,j} & [H_*P\otimes_{H_*k}H_*Y]_{2j,j} & 0\\
[\Tor_2^{H_*k}(H_*P,H_*Y)]_{2j+1,j} & [\Tor_1^{H_*k}
(H_*P,H_*Y)]_{2j+1,j} & [H_*P\otimes_{H_*k}H_*Y]_{2j+1,j} & 0
}$$
So there are no differentials into or out of the upper right
group, $E_2^{0,-2j}$. We deduce
that the homomorphism
$$CH_*(P) \otimes_R CH_*(Y;R) \arrow CH_*(P\otimes M(Y))
= CH_*\bigg( \prod_{m=1}^{\infty} M(Y)\bigg) $$
is an isomorphism. In particular, it is surjective.

That is,
$$\bigg( \prod_{m=1}^{\infty}R\bigg)
\otimes_R CH_*(Y;R) \arrow \prod_{m=1}^{\infty}
CH_*(Y;R)$$
is surjective. But (by definition of the tensor product of $R$-modules)
any element of the first tensor product maps to a sequence
$(a_1, a_2, \ldots)$ in $\prod_m CH_*(Y;R)$ such that $a_1, a_2, \ldots$
all lie in some finitely generated $R$-submodule of $CH_*(Y;R)$.
So we get a contradiction if $CH_*(Y;R)$ is not finitely generated
as an $R$-module.
\end{proof}

Another proof that $DMT(k;R)$ is not closed under products
in $DM(k;R)$, when there is a $k$-variety whose Chow groups
are not finitely generated, can be given as follows.
By Balmer, Dell'Ambrogio, and Sanders,
the inclusion $f^*$
of $DMT(k;R)$ into $DM(k;R)$ has a left adjoint if and only
if it has a three-fold
right adjoint (Theorem \ref{five} above). This in turn is equivalent
to $f^{(1)}$ preserving arbitrary direct sums (Theorem \ref{adjointcriterion}),
or again to $f_*$ (also called $N\mapsto C(N)$)
preserving compact objects (Theorem \ref{compact}).
By Theorem \ref{colocalcompact},
if that holds, then $CH_*(X;R)$ is a finitely generated
$R$-module for every smooth projective $k$-variety $X$.

Theorem \ref{chow} implies that the subcategory of mixed Tate motives
is not closed under products in $DM(k;R)$, in many cases. For example:

\begin{corollary}
\label{Zcoeffs}
Let $k$ be an algebraically closed field. Then the product
$\prod_{m=1}^{\infty} \Z(0)$ in $DM(k;\Z)$ is not mixed Tate.
In particular,
the subcategory of mixed Tate motives is not closed under
products in $DM(k;\Z)$, and the inclusion $DMT(k;\Z)\arrow
DM(k;\Z)$ does not have a left adjoint or a three-fold right adjoint.
\end{corollary}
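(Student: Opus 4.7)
The plan is to combine Theorem \ref{chow} with the fact that an elliptic curve over an algebraically closed field already has non-finitely-generated zero-cycles. Specifically, take $Y = E$ to be an elliptic curve over $k$. Using the identification $CH_0(E;\Z) \cong \Z \oplus E(k)$, it suffices to check that $E(k)$ is not finitely generated as an abelian group. This is immediate: for every prime $\ell$ invertible in $k$, the $\ell$-torsion subgroup $E[\ell] \cong (\Z/\ell)^2$ sits inside $E(k)$, and these pieces pile up to produce infinitely many independent torsion summands. Thus $CH_0(E;\Z)$ is not a finitely generated $\Z$-module. If $P := \prod_{m=1}^\infty \Z(0)$ were mixed Tate in $DM(k;\Z)$, then Theorem \ref{chow} applied with $R = \Z$, $Y = E$, and $i = 0$ would force $CH_0(E;\Z)$ to be finitely generated, a contradiction. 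Hence $P$ is not mixed Tate.

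For the remaining assertions, each factor $\Z(0)$ is itself an object of $DMT(k;\Z)$, so the previous step exhibits a product of mixed Tate motives in $DM(k;\Z)$ that leaves the subcategory, giving non-closure under products. To rule out a left adjoint, I would argue as follows. If the inclusion $f^* \colon DMT(k;\Z) \arrow DM(k;\Z)$ had a left adjoint, then $f^*$ would itself be a right adjoint and would therefore preserve arbitrary products. The product of the objects $\Z(0)$ formed internally in $DMT(k;\Z)$ is a mixed Tate motive by construction; applying $f^*$ to this object would identify it with the product $P$ computed in $DM(k;\Z)$, and hence force $P$ to be mixed Tate, contradicting the first paragraph. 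Finally, non-existence of a three-fold right adjoint for $f^*$ is equivalent to non-existence of a left adjoint by Theorem \ref{five}, so it follows as well.

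There is no substantive obstacle once Theorem \ref{chow} is in hand; the entire content is the input variety. The mild subtlety is simply noticing that $k$ algebraically closed — with no hypothesis beyond the setup's requirement that the exponential characteristic of $k$ be invertible in $\Z$ — already provides, for free, the prime-to-characteristic torsion on $E$ needed to defeat finite generation of $CH_0$. Everything after that is a formal unwinding of Theorems \ref{chow} and \ref{five}.
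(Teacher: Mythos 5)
Your argument is correct and follows the same route as the paper: reduce to Theorem \ref{chow} via an elliptic curve $E$ over $k$, use the prime-to-characteristic torsion $E[\ell]\cong(\Z/\ell)^2$ for infinitely many $\ell$ to defeat finite generation of $CH_0(E)$, and then deduce the adjoint statements from Theorems \ref{adjointcriterion} and \ref{five}. No gaps; your explicit unwinding of why failure of closure under products kills the left adjoint is exactly what the paper leaves implicit.
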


\begin{proof}
By Theorem \ref{chow}, to show that
$\prod_{m=1}^{\infty} \Z(0)$ in $DM(k;\Z)$ is not mixed Tate,
it suffices
to give an example of a smooth projective variety $Y$ over $k$
such that $CH_0(Y)$ is not finitely generated as an abelian group.
Since $k$ is algebraically closed, we can take $Y$ to be any elliptic curve
over $k$. Then we have an exact sequence
$$ 0 \arrow Y(k) \arrow CH_0(Y) \arrow \Z \arrow 0.$$
The group of points $Y(k)$ is not finitely generated, because it
has torsion of arbitrarily large order.
Since $DMT(k;\Z)$ is not closed under products
in $DM(k;\Z)$, the inclusion does not have a left adjoint. By Balmer,
Dell'Ambrogio, and Sanders, since $DMT(k;\Z)$ is rigidly-compactly
generated, it follows that the inclusion does not have a three-fold
right adjoint (Theorem \ref{five}).
\end{proof}

We can also consider motives with rational coefficients:

\begin{corollary}
\label{ratmotive}
Let $k$ be an algebraically closed field which is not
the algebraic closure of a finite field. Then the product
$\prod_{m=1}^{\infty} \Q(0)$ in $DM(k;\Q)$ is not mixed Tate.
So the subcategory of mixed Tate motives is not closed
under products in $DM(k;\Q)$,
and the inclusion $DMT(k;\Q)\arrow DM(k;\Q)$
does not have a left adjoint or a three-fold right adjoint.
\end{corollary}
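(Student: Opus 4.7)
The plan is to mirror the argument for Corollary \ref{Zcoeffs}, replacing $\Z$ coefficients by $\Q$. Suppose for contradiction that $P=\prod_{m=1}^{\infty}\Q(0)$ is mixed Tate in $DM(k;\Q)$. By Theorem \ref{chow} (applied with $R=\Q$), this forces $CH_i(Y;\Q)$ to be a finite-dimensional $\Q$-vector space for every smooth projective $k$-variety $Y$ and every $i$. So it suffices to exhibit a smooth projective variety violating this conclusion, and the natural candidate is an elliptic curve $E/k$. Using the degree map, the standard exact sequence $0\arrow E(k)\arrow CH_0(E)\arrow \Z\arrow 0$ tensored with $\Q$ shows that $\dim_\Q CH_0(E;\Q)=\infty$ is equivalent to $E(k)$ having infinite Mordell--Weil rank.

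The technical content is thus reduced to producing an elliptic curve $E$ over $k$ whose group of $k$-rational points has infinite rank. In characteristic zero, $k$ contains $\QQ$, so it is enough to find $E/\Q$ with $E(\QQ)$ of infinite rank; this follows from a quadratic twisting argument, since for infinitely many squarefree $d\in \Z$ the twist $E_d/\Q$ has positive rank, the corresponding points live in $E(\Q(\sqrt{d}))\subset E(\QQ)$, and are $\Z$-linearly independent modulo torsion because the $\mathrm{Gal}(\Q(\sqrt{d})/\Q)$-eigencharacters for different $d$ are distinct. In characteristic $p>0$, the hypothesis $k\neq \overline{\F_p}$ guarantees a transcendental $t\in k$ over $\F_p$, so $k\supset \overline{\F_p(t)}$; a parallel twisting argument applied to a non-isotrivial elliptic curve over $\F_p(t)$ produces infinitely many independent points in $E(\overline{\F_p(t)})\subset E(k)$. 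In both cases the conclusion is also a consequence of the infinite-rank theorems of Frey and Jarden.

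The hard part is entirely this Mordell--Weil input; once it is established, the rest is formal. The resulting contradiction with Theorem \ref{chow} shows that $P$ is not mixed Tate, so $DMT(k;\Q)$ is not closed under products in $DM(k;\Q)$. By Theorem \ref{adjointcriterion}, the inclusion $f^*\colon DMT(k;\Q)\arrow DM(k;\Q)$ therefore does not have a left adjoint, and since $DMT(k;\Q)$ is rigidly-compactly generated, Theorem \ref{five} rules out a three-fold right adjoint as well.
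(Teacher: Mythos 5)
Your proposal is correct and follows essentially the same route as the paper: reduce via Theorem \ref{chow} to exhibiting a smooth projective $k$-variety with infinite-dimensional $CH_0(\cdot\,;\Q)$, take an elliptic curve, and invoke the infinite-rank theorem of Frey and Jarden (the paper simply cites \cite[Theorem 9.1]{FJ} where you additionally sketch the quadratic-twisting proof of that input). The formal deductions via Theorems \ref{adjointcriterion} and \ref{five} are exactly those of the paper's proof of Corollary \ref{Zcoeffs}.
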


\begin{proof}
By Theorem \ref{chow}, to show that
$\prod_{m=1}^{\infty} \Q(0)$ in $DM(k;\Q)$ is not mixed Tate,
it suffices to find
a smooth projective variety $X$ over $k$ such that $CH_0(X;\Q)$
has infinite dimension as a $\Q$-vector space. Since $k$ is not
the algebraic closure of a finite field, this holds for any 
elliptic curve $X$ over $k$, by Frey and Jarden
\cite[Theorem 9.1]{FJ}. The other statements follow
as in the proof of Corollary \ref{Zcoeffs}.
\end{proof}

By contrast, Theorem \ref{finitefield} shows, under the Tate-Beilinson
conjecture, that for $k$ algebraic over a finite field,
the subcategory of mixed Tate motives {\it is }closed
under products in $DM(k;\Q)$,
and the inclusion $DMT(k;\Q)\arrow DM(k;\Q)$ has a left adjoint.

Finally, we can say something with finite coefficients:

\begin{theorem}
\label{modpmotive}
Let $p$ be a prime number. Then the product
$\prod_{m=1}^{\infty} \F_p(0)$ in $DM(\C;\F_p)$ is not mixed Tate.
So the subcategory of mixed Tate motives is not closed
under products in $DM(\C;\F_p)$,
and the inclusion $DMT(\C;\F_p)
\arrow DM(\C;\F_p)$ does not have a left adjoint or a three-fold
right adjoint.

For any algebraically closed field $k$ of characteristic zero
in place of $\C$, these results hold for all prime numbers $p$
congruent to 1 modulo 3.
\end{theorem}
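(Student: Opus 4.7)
Following Corollaries \ref{Zcoeffs} and \ref{ratmotive}, the plan is to apply Theorem \ref{chow}: it suffices to produce, for each pair $(k,p)$ in the statement, a smooth projective variety $Y$ over $k$ and an integer $i$ such that $CH_i(Y;\F_p) = CH_i(Y)/p$ has infinite $\F_p$-dimension. The failure of $DMT(k;\F_p)$ to be closed under products, the non-existence of a left adjoint (via Theorem \ref{adjointcriterion}), and the non-existence of a three-fold right adjoint (via Theorem \ref{five}, using that $DMT(k;\F_p)$ is rigidly-compactly generated) then follow verbatim as in those corollaries.

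A preliminary point: one cannot hope to use $CH_0$. By Suslin's rigidity theorem, $CH_0(Y;\F_p) \cong \F_p$ for every smooth projective connected $Y$ over an algebraically closed field of characteristic different from $p$. So one is forced to use cycles of codimension at least $2$.

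For an arbitrary algebraically closed field $k$ of characteristic zero with $p\equiv 1\pmod 3$: I would use Schoen's construction on the threefold $Y = E\times E\times E$, where $E/k$ is the elliptic curve with $j$-invariant $0$. This $E$ has complex multiplication by the Eisenstein integers $\Z[\omega]$, and since $p \equiv 1\pmod 3$, $p$ splits in $\Z[\omega]$ as $p=\pi\bar\pi$. Schoen produces an infinite $\F_p$-linearly independent family of classes in $CH^2(Y;\F_p)$ built from the graphs of the endomorphisms $[\pi]$ and $[\bar\pi]$. Since everything is defined over $\bar\Q\subseteq k$, the argument descends to any such $k$. For $k = \C$ and a prime $p \not\equiv 1\pmod 3$, one adapts the same argument using a CM elliptic curve for another imaginary quadratic order of class number one in which $p$ splits (for every prime $p$, some order from the Heegner list works), or else invokes results of Rosenschon-Srinivas, who produce complex projective varieties with infinite $CH^2/p$ for every prime $p$ by transcendental methods available over $\C$. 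This is precisely the point at which $\C$ has more flexibility than a general algebraically closed $k$ of characteristic zero.

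The main obstacle is ensuring that the classes constructed are nonzero in $CH^2(Y)/p = CH^2(Y)\otimes_\Z \F_p$ rather than merely in the $p$-torsion subgroup $CH^2(Y)[p]$: the conclusion of Theorem \ref{chow} concerns the former, while Schoen-type constructions often only a priori produce $p$-torsion. One bridges this gap by pairing the classes with an $\F_p$-valued regulator (for example from $p$-adic \'etale cohomology or a Bloch-Ogus argument) that annihilates divisible classes and therefore factors through $CH^2(Y)/p$, giving $\F_p$-linear independence there.
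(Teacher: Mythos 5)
Your overall strategy is exactly the paper's: reduce to Theorem \ref{chow}, observe that $CH_0$ is useless over an algebraically closed field, use Schoen's Fermat-cubic threefold $E\times E\times E$ for $p\equiv 1\pmod 3$ (transferring from $\QQ$ to general $k$ by Suslin rigidity --- ``descends'' is the wrong direction, but the idea is the right one), and use a separate transcendental example over $\C$ for general $p$. One of your two proposed routes for the case $k=\C$, $p$ arbitrary, is however genuinely wrong: it is \emph{not} true that every prime splits in some imaginary quadratic order of class number one. The Heegner list gives only nine imaginary quadratic fields, and by Chebotarev a positive density (about $2^{-9}$) of primes is inert in all of them simultaneously, so no CM curve with class number one is available for such $p$. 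Your fallback route is the correct one, and is what the paper does, except that the attribution should be sharpened: Rosenschon--Srinivas do not cover all primes, and the paper invokes its author's strengthening \cite{Totaroinfinite} showing that a very general principally polarized abelian $3$-fold over $\C$ has $CH^2(X)/p$ infinite for every prime $p$. Finally, your closing worry about torsion versus mod-$p$ classes is already resolved by the statement of Schoen's theorem, which asserts infiniteness of $CH^2(X)/p$ itself; the regulator argument you sketch is part of the internal proof of the cited results, not an extra step you need to supply.
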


\begin{proof}
By Theorem \ref{chow}, to show that $DMT(k;\F_p)$ is not closed under
products in $DM(k;\F_p)$,
it suffices to exhibit a smooth
projective variety $X$ over $k$ with $CH_i(X;\F_p)=CH_i(X)/p$
infinite for some $i$.
For $k$ algebraically closed,
$CH_0(X;\F_p)=CH_0(X)/p$ is finite for every smooth
projective variety $X$ over $k$, and so the proof has to be slightly
different from the previous cases. We can instead use Schoen's
theorem that, for $k$ algebraically closed of characteristic zero
and $p\equiv 1\pmod{3}$,
the product $X$ of three copies of the Fermat cubic curve
$x^3+y^3+z^3=0$ over $k$
has $CH_1(X)/p$ infinite \cite[Theorem 0.2]{Schoen}.
(Schoen proves this for $k=\QQ$, and then we can use
Suslin's theorem that $CH_i(X;\F_p)\arrow CH_i(X_F,\F_p)$
is an isomorphism for every algebraically closed field
$F/\QQ$ 
\cite[Corollary 2.3.3]{SuslinICM}.)

Strengthening a result by Rosenschon and Srinivas \cite{RS},
I showed that $CH_1(X)/p$ is infinite for $X$ a very general
principally polarized abelian 3-fold over $\C$
and {\it all }prime numbers $p$ \cite{Totaroinfinite}.
This yields the result
we want over $\C$. The statements about adjoint functors follow
as in the proof of Corollary \ref{Zcoeffs}.
\end{proof}

\section{Products of cellular spectra}

Let $k$ be a field.
Following Dugger-Isaksen, the subcategory of {\it cellular spectra }in
the stable homotopy category $SH(k)$ is the smallest localizing
subcategory that contains the spheres $S^{a,b}$ for all integers
$a$ and $b$ \cite{DI}. Here $S^{1,1}$ is the class of the pointed
curve $(A^1-0,1)$ over $k$, and $S^{1,0}$ is the circle as a simplicial set.
We have $S^{a+1,b}=S^{a,b}[1]$, in terms of the 
structure of $SH(k)$ as a triangulated category. The natural functor
from $SH(k)$ to $DM(k;R)$ takes $S^{a,b}$ to $R(b)[a]$.

\begin{corollary}
Let $k$ be an algebraically closed field which is not
the algebraic closure of a finite field. Then $S^{0,0}_{\Q}$
is cellular in $SH(k)$, but the product
$\prod_{m=1}^{\infty} S^{0,0}_{\Q}$ in $SH(k)$ is not cellular.
So the subcategory of cellular spectra is not closed
under products in $SH(k)$,
and the inclusion
of this subcategory into $SH(k)$ does not have a left
adjoint. It also does not have a three-fold right adjoint.
\end{corollary}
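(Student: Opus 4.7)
The plan is to transfer the problem to $DM(k;\Q)$ via the natural functor $\phi\colon SH(k)\arrow DM(k;\Q)$ sending $S^{a,b}\mapsto \Q(b)[a]$, and then to invoke Corollary \ref{ratmotive}. First, I would verify that $S^{0,0}_{\Q}$ is cellular: it is the homotopy colimit of the sequence $S^{0,0}\arrow S^{0,0}\arrow\cdots$ whose bonding maps are multiplication by successive positive integers, and so lies in the localizing subcategory of $SH(k)$ generated by $S^{0,0}$.

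For the main claim, $\phi$ is a left adjoint (essentially smashing with the motivic Eilenberg-MacLane spectrum $H\Q$, by R\"ondigs-\O stv\ae r), so it preserves arbitrary direct sums; since it sends the generators $S^{a,b}$ of cellular spectra to the generators $\Q(b)[a]$ of $DMT(k;\Q)$, it carries cellular spectra to mixed Tate motives. The crucial step is to identify $\phi(\prod_{m=1}^{\infty} S^{0,0}_{\Q})$ with $\prod_{m=1}^{\infty}\Q(0)$. I would argue this by noting that the rationalization $SH(k)\arrow SH(k)_{\Q}$ is a reflective (indeed smashing) localization, so the inclusion $SH(k)_{\Q}\hookrightarrow SH(k)$ preserves products; hence the product of copies of $S^{0,0}_{\Q}$ formed in $SH(k)$ coincides with the product formed in $SH(k)_{\Q}$. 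Since $k$ is algebraically closed, $-1$ is a sum of squares, so by Morel's splitting of $SH(k)_{\Q}$ together with Cisinski-D\'eglise's identification of the positive part, the induced functor $\bar{\phi}\colon SH(k)_{\Q}\arrow DM(k;\Q)$ is an equivalence of triangulated categories. Equivalences preserve products, and so $\phi(\prod S^{0,0}_{\Q})=\prod \Q(0)$.

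I would then conclude as follows. By Corollary \ref{ratmotive}, $\prod \Q(0)$ is not mixed Tate, so $\prod S^{0,0}_{\Q}$ is not cellular. By Theorem \ref{adjointcriterion}, the inclusion of cellular spectra into $SH(k)$ does not preserve products and therefore has no left adjoint. The cellular subcategory is rigidly-compactly generated (each $S^{a,b}$ is invertible under the smash product, hence rigid, and is compact), so the Balmer-Dell'Ambrogio-Sanders criterion used in Theorem \ref{five} applies verbatim: the inclusion has a left adjoint if and only if it admits a three-fold right adjoint, and hence it has neither.

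The hard part is the identification $\phi(\prod S^{0,0}_{\Q})=\prod \Q(0)$. Since $\phi$ is only a left adjoint, it need not preserve products, and the natural comparison map $\phi(\prod X_{\alpha})\arrow \prod\phi(X_{\alpha})$ could in principle fail to be an isomorphism; my plan depends on the (nontrivial) rational equivalence $SH(k)_{\Q}\simeq DM(k;\Q)$ for $k$ with $-1$ a sum of squares. A possible alternative would be to exhibit $\prod \Q(0)$ as a direct summand of $\phi(\prod S^{0,0}_{\Q})$ in $DM(k;\Q)$, which would still suffice since $DMT(k;\Q)$ is thick, but I do not see such a retraction a priori.
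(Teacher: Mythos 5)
Your proposal is correct and follows essentially the same route as the paper: both arguments pass through the rational subcategory $SH(k;\Q)\subset SH(k)$ (using that the product of rational sphere spectra is already rational), invoke the Cisinski--D\'eglise/Morel equivalence $SH(k;\Q)\simeq DM(k;\Q)$ for $-1$ a sum of squares to identify the product with $\prod_{m}\Q(0)$, and then contradict Corollary \ref{ratmotive}, concluding via Theorem \ref{adjointcriterion} and the Balmer--Dell'Ambrogio--Sanders criterion. Your worry about $\phi$ not preserving products is exactly the point the paper also addresses, and your resolution (the rational subcategory is closed under products, and the equivalence preserves them) is the same as the paper's observation that the rationalization of $\prod_m S^{0,0}_{\Q}$ is itself.
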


\begin{proof}
Following B\"okstedt and Neeman, 
the {\it homotopy colimit }$X_{\infty}=\hocolim (X_0\arrow X_1
\arrow \cdots)$ in a triangulated category with arbitrary direct
sums is defined as a cone of the morphism
$$1-s\colon \oplus_{i\geq 0}X_i\arrow \oplus_{i\geq 0}X_i,$$
where $s$ is the given map from each $X_i$ to $X_{i+1}$
\cite{BN}.
The spectrum $S^{0,0}_{\Q}$ is cellular in $SH(k)$, because it can be
defined as the homotopy colimit of the sequence 
$$\xymatrix@C-10pt@R-10pt{
S^{0,0} \ar[r]_2 & S^{0,0}\ar[r]_3 & \cdots.
}$$

We can think
of $SH(k;\Q)$ as a full subcategory of $SH(k)$, with the 
rationalization of a spectrum $X$ defined as $X\wedge S^{0,0}_{\Q}$,
or equivalently as the homotopy colimit of 
$$\xymatrix@C-10pt@R-10pt{
X \ar[r]_2 & X \ar[r]_3 & \cdots.
}$$
It is clear that rationalization $SH(k)\arrow SH(k;\Q)$ takes cellular
objects in $SH(k)$ to cellular objects in $SH(k;\Q)$ (meaning objects
in the smallest localizing subcategory of $SH(k;\Q)$ that contains all rational
spheres $S^{a,b}_{\Q}$). 

Suppose that $\prod_{m=1}^{\infty} S^{0,0}_{\Q}$ is cellular
in $SH(k)$. Then the rationalization
$(\prod_{m=1}^{\infty} S^{0,0}_{\Q})_{\Q}$ is cellular in $SH(k;\Q)$.
From the definition of the rationalization as a homotopy colimit,
we see that this rationalization
is simply $\prod_{m=1}^{\infty} S^{0,0}_{\Q}$. We conclude
that $\prod_{m=1}^{\infty} S^{0,0}_{\Q}$
is cellular in $SH(k;\Q)$.

Since $k$ is algebraically closed, $-1$ is a sum of squares in $k$.
Under that assumption, Cisinski and D\'eglise deduced
from Morel's work that $SH(k;\Q)$ is equivalent
to the derived category of motives, $DM(k;\Q)$
\cite[Corollary 16.2.14]{CDtri}. So $\prod_{m=1}^{\infty}
\Q(0)$ is a mixed Tate motive in $DM(k;\Q)$, contradicting
Corollary \ref{ratmotive}. So in fact
$\prod_{m=1}^{\infty} S^{0,0}_{\Q}$
in $SH(k)$ is not cellular. As a result,
the subcategory of cellular spectra is not closed
under products
in $SH(k)$.

As a result, the inclusion $f^*$ of cellular spectra into $SH(k)$
does not have a left adjoint. The inclusion does have a right adjoint
$f_*$, which in turn has a right adjoint $f^{(1)}$,
by Theorem \ref{tworight}.
Since the subcategory of cellular
spectra is rigidly-compactly generated and $f^*$ does not have a left
adjoint, it follows from Balmer, Dell'Ambrogio, and Sanders
that $f^{(1)}$ does not have a right adjoint \cite[Theorem 3.3]{BDS}.
\end{proof}

\section{Effective motives}

Here we show that the inclusion from the subcategory
of effective motives $DM_{\eff}(k;R)$ to $DM(k;R)$
does not have a left adjoint or a three-fold right adjoint,
in many cases. For the three-fold right adjoint, this is
a reformulation of an example by Ayoub. The right adjoint $f_*$
of the inclusion $f^*$ has been used by Huber and Kahn under
the name $\nu_{\leq 0}$ (or step 0 of the {\it slice filtration})
\cite{HK}.

\begin{theorem}
\label{effright}
Let $k$ be an algebraically closed field of characteristic zero.
Let $f^*$ be the inclusion of $DM_{\eff}(k,\Q)$ into $DM(k,\Q)$.
Then the right adjoint $f_*$ of $f^*$ does not preserve compact objects;
the right adjoint $f^{(1)}$ of $f_*$ does not preserve arbitrary direct 
sums; and $f^{(1)}$ does not have a right adjoint:
$$f^*\dashv f_* \dashv f^{(1)}$$
\end{theorem}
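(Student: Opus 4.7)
The three non-existence assertions are formally equivalent, so it suffices to prove just one of them. Since $DM(k,\Q)$ is compactly generated, Theorem \ref{compact} applied to the adjunction $f_* \dashv f^{(1)}$ gives that $f_*$ preserves compact objects if and only if $f^{(1)}$ preserves arbitrary direct sums; and since $DM_{\eff}(k,\Q)$ is compactly generated, Theorem \ref{adjointcriterion}(1) applied to $f^{(1)}$ gives that $f^{(1)}$ has a right adjoint if and only if $f^{(1)}$ preserves arbitrary direct sums. Hence all three claims collapse to the single statement that $f_*$ does not preserve compact objects, which is what I will aim to prove.

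For $k = \C$, this is Ayoub's example as recorded in \cite[Proposition A.1]{Huber}. The geometric input is Clemens's theorem: a very general quintic threefold $X \subset \P^4_\C$ has Griffiths group $\operatorname{Griff}^2(X)_\Q$ of infinite $\Q$-dimension. The plan is to form a compact object $M$ in $DM(\C,\Q)$ built from $M(X)(-1)$, and to identify a specific Hom-group of the shape $\Hom_{DM_{\eff}}(\Q(j)[i], f_*M)$ (for suitable $i,j$) with a group containing $\operatorname{Griff}^2(X)_\Q$. If $f_*M$ were compact in $DM_{\eff}(\C,\Q)$, Theorem \ref{compactgens} would place it in the smallest thick subcategory of $DM_{\eff}(\C,\Q)$ generated by motives of smooth projective varieties, in which case every such Hom-group would be a finite iterated extension of pieces of motivic cohomology of smooth projective varieties, hence of finite $\Q$-dimension in each fixed bidegree. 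The infinite-dimensional Griffiths group then yields the contradiction.

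To extend from $\C$ to an arbitrary algebraically closed field $k$ of characteristic zero, I would argue by a Lefschetz-principle base-change argument. If $k \supseteq \C$ (after choice of embedding), base change along $DM(\C,\Q) \to DM(k,\Q)$ sends the compact object $M$ to a compact object over $k$ and sends Clemens's example to a variety whose Griffiths group only grows under field extension, so non-compactness of $f_*M$ persists over $k$. For algebraically closed $k$ not containing $\C$, such as $k = \overline{\Q}$, the plan is instead to spread out and specialize Clemens's family (or cite an analogue valid over $k$) to obtain a smooth projective threefold defined over $k$ whose rational Griffiths group is already infinite-dimensional, and then run the same obstruction argument.

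The principal obstacle is the geometric content of the Ayoub--Clemens example, namely the identification of a specific Hom-group in $DM_{\eff}$ with the Griffiths group of a threefold; I would import this from \cite[Proposition A.1]{Huber} as a black box rather than reproduce it. The formal reduction among the three claims and the field-extension descent are routine once that input is in hand.
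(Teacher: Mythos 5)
Your formal reduction is exactly the paper's: by Theorem \ref{compact} applied to $f_*\dashv f^{(1)}$ and Theorem \ref{adjointcriterion}(1) applied to $f^{(1)}$, everything collapses to showing that $f_*$ does not preserve compact objects, and for $k=\C$ this is precisely Ayoub's \cite[Proposition A.1]{Huber}, imported as a black box just as in the paper. The case $k\supseteq\C$ also goes through, since the Griffiths group with $\Q$-coefficients of Clemens's threefold remains infinite-dimensional after extension of algebraically closed fields of characteristic zero.

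The gap is in your treatment of algebraically closed fields not containing $\C$, above all $k=\overline{\Q}$. Your primary plan --- ``spread out and specialize Clemens's family'' --- does not work: Clemens's theorem concerns a \emph{very general} quintic threefold, i.e.\ one outside a countable union of proper closed subvarieties of the parameter space, and the $\overline{\Q}$-points of that parameter space form a countable set which could a priori be entirely contained in the excluded locus. There is no specialization argument that transfers a very-general statement over $\C$ to some $\overline{\Q}$-point; one genuinely needs a threefold \emph{defined over} $\overline{\Q}$ with infinite-rank rational Griffiths group. Your parenthetical ``or cite an analogue valid over $k$'' is exactly the missing input. The paper supplies it by replacing Clemens's example with Schoen's theorem \cite{SchoenGriffiths} that certain elliptic modular threefolds over $\overline{\Q}$ have Griffiths group of infinite rank; since every algebraically closed field of characteristic zero contains $\overline{\Q}$, this single example then handles all such $k$ uniformly (making your case division unnecessary). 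With Schoen's example substituted in, your argument is the paper's.
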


\begin{proof}
Ayoub showed that $f_*\colon DM(k,\Q)\arrow DM_{\eff}(k,\Q)$
does not preserve compact objects, for $k$ algebraically closed
of characteristic zero with sufficiently large transcendence
degree. He used Clemens's example
of a complex 3-fold $X$ whose Griffith group has infinite rank
\cite[Proposition A.1]{Huber}. The argument works for any algebraically
closed field of characteristic zero by using instead Schoen's
example of a 3-fold over $\QQ$ whose Griffiths group
has infinite rank \cite{SchoenGriffiths}.
It follows that the right adjoint $f^{(1)}$ of $f_*$ does not preserve
arbitrary direct sums, by Theorem \ref{compact}. Therefore,
$f^{(1)}$ does not have a right adjoint.
\end{proof}

A simpler argument shows that the inclusion $f^*$ from $DM_{\eff}(k;R)$
to $DM(k;R)$ does not have a left adjoint in most cases:

\begin{theorem}
\label{effleft}
Let $k$ be a field, and let $R$ be a commutative noetherian ring in which
the exponential characteristic of $k$ is invertible. If the inclusion
from $DM_{\eff}(k;R)$ to $DM(k;R)$ has a left adjoint,
then every motivic cohomology group $H^j(X,R(i))$
is a finitely generated $R$-module
for every smooth projective variety $X$ over $k$. This fails,
for example, if $R=\Q$ and $k$ is not an algebraic extension
of a finite field; or if $R=\Z$ and $k$ is an infinite field;
or if $R=\F_p$ for a prime number congruent to 1 modulo 3
and $k$ is an algebraically closed field of characteristic
zero; or if $R=\F_p$ for any prime number $p$ and $k=\C$.
\end{theorem}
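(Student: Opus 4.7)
By Theorem \ref{adjointcriterion}(2), the hypothesis that $f^*$ has a left adjoint is equivalent to $f^*$ preserving arbitrary products, i.e., $DM_{\eff}(k;R)$ is closed under arbitrary products in $DM(k;R)$. Granted this, the plan for the forward implication is to adapt the proof of Theorem \ref{chow}. Fix a smooth projective variety $X$ over $k$ and integers $i,j$; we may assume $i\geq 0$, since motivic cohomology of smooth schemes vanishes in negative weight by Bloch's formula. Form the product $P_{i,j}:=\prod_{m=1}^\infty R(i)[j]$ in $DM(k;R)$. Since $R(i)[j]$ is effective for $i\geq 0$, the hypothesis makes $P_{i,j}$ effective, and since $R(i)[j]$ is an invertible (hence strongly dualizable) object we have $P_{i,j}\cong R(i)[j]\otimes P$, where $P:=\prod_m R(0)$ is itself effective.

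The heart of the argument is to establish a surjection
$$H^j(X,R(i))\otimes_R \bigg(\prod_{m=1}^\infty R\bigg)\surj \prod_{m=1}^\infty H^j(X,R(i)).$$
From such a surjection, finite generation of $H^j(X,R(i))$ as an $R$-module follows by the standard tensor-product observation used in the proof of Theorem \ref{chow}: every element of the source is a finite sum of elementary tensors, so its image lies in a finitely generated $R$-submodule, forcing $H^j(X,R(i))$ itself to be finitely generated over noetherian $R$. The natural route to this surjection is to invoke the Dugger-Isaksen K\"unneth spectral sequence for $M(X)\otimes P$, using the strong dualizability of $M(X)$ to rewrite $\Hom(M(X),P_{i,j})$ in terms of motivic homology of $\prod_m M(X)$. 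The main obstacle is that the Dugger-Isaksen spectral sequence as stated in \cite{DI} requires cellularity (i.e., $P$ mixed Tate), while here $P$ is only known to be effective. The hard step therefore is to replace this by either a slice-filtration spectral sequence in $DM_{\eff}$ converging to $H^*(X,R(i)\otimes P)$ whose $E_2$-page decomposes as a tensor product, or to exploit compactness of $R(i)[j]$ in $DM_{\eff}$ (for $i\geq 0$) by comparing $\bigoplus_m R(i)[j]$ and $\prod_m R(i)[j]$ in $DM_{\eff}$ via the associated cofiber triangle and controlling the motivic cohomology of its third term.

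For the claimed counterexamples, in each case one exhibits a smooth projective variety $X$ over $k$ with some $H^j(X,R(i))$ not finitely generated over $R$. For $R=\Q$ and $k$ not algebraic over a finite field, any elliptic curve $E$ over $k$ has $H^2(E,\Q(1))=CH^1(E;\Q)$ of infinite $\Q$-dimension, by Frey-Jarden, as in Corollary \ref{ratmotive}. For $R=\Z$ and any infinite field $k$, take $X=\Spec k$ (a smooth projective variety of dimension 0): then $H^1(\Spec k,\Z(1))\cong k^\times$, which is not finitely generated as an abelian group for any infinite $k$ (either the torsion of $k^\times$ is infinite, as for $\overline{\F_p}$, or the torsion-free rank is infinite, as whenever $k$ contains $\Q$ or has positive transcendence degree over $\F_p$). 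For $R=\F_p$ with $p\equiv 1\pmod 3$ and $k$ algebraically closed of characteristic zero, take $X$ the triple product of the Fermat cubic $x^3+y^3+z^3=0$; Schoen's theorem, extended from $\QQ$ to any such $k$ via Suslin as in Theorem \ref{modpmotive}, gives $H^4(X,\F_p(2))=CH_1(X;\F_p)$ infinite. Finally, for $R=\F_p$ (any prime $p$) and $k=\C$, take $X$ a very general principally polarized abelian 3-fold: my strengthening of Rosenschon-Srinivas, cited in the proof of Theorem \ref{modpmotive}, gives $H^4(X,\F_p(2))=CH_1(X;\F_p)$ infinite for every $p$.
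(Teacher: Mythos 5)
Your forward implication is not actually proved: you reduce to closure of $DM_{\eff}(k;R)$ under products and then try to rerun the K\"unneth argument of Theorem \ref{chow}, but as you yourself note, the Dugger--Isaksen spectral sequence needs the product $P=\prod_m R(0)$ to be \emph{cellular} (mixed Tate), and knowing only that $P$ is effective gives you no such tool. The two fixes you gesture at (a slice-filtration K\"unneth, or comparing $\oplus_m R(i)[j]$ with $\prod_m R(i)[j]$ via the cofiber) are not carried out and neither is routine, so the heart of the theorem is missing. The paper's route is entirely different and avoids any K\"unneth formula: since $f^*$ preserves direct sums, the hypothesized left adjoint $f_{(1)}$ preserves compact objects (Theorem \ref{compact}); every compact object $N$ of $DM_{\eff}(k;R)$ lies in the thick subcategory generated by the motives $M(Y)$ with $Y$ smooth projective (Theorem \ref{compactgens}), and $H^0(M(Y),R(0))\cong CH^0(Y,0;R)$ is $R$ in degree $0$ and vanishes otherwise, so for $R$ noetherian the Hom exact sequences force $H^0(N,R(0))$ to be finitely generated; finally, since $R(0)$ is effective, adjunction gives $H^0(A,R(0))\cong H^0(f_{(1)}(A),R(0))$ for every $A$, and applying this to the compact (non-effective) object $A=M(X)(-i)[-j]$ yields finite generation of $H^j(X,R(i))$. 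You should adopt this argument or supply a genuine substitute for the missing K\"unneth step.

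There is also an error in your first counterexample. For $R=\Q$ the hypothesis is only that $k$ is not an algebraic extension of a finite field; Frey--Jarden applies to algebraically closed (or similarly large) fields, and for $k=\Q$ the Mordell--Weil theorem makes $E(k)$ finitely generated, so $CH^1(E;\Q)$ is finite-dimensional and your elliptic curve gives nothing. The paper instead uses $H^1(\Spec k,\Q(1))\cong k^*\otimes_{\Z}\Q$, which is infinite-dimensional for every field that is not algebraic over a finite field (e.g.\ $\Q^*\otimes\Q\cong\oplus_p\Q$). Your remaining three counterexamples are fine and match the paper's, except that for $R=\F_p$, $p\equiv 1\pmod 3$, the paper quotes Schoen for $CH^2(X)/p$ directly over any algebraically closed field of characteristic zero rather than routing through $CH_1$ and Suslin rigidity; either formulation works there.
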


\begin{proof}
Suppose that the inclusion $f^*$ from $DM_{\eff}(k;R)$ to $DM(k;R)$
has a left adjoint $f_{(1)}$. Since $f^*$ preserves arbitrary direct
sums, $f_{(1)}$ must preserve compact objects, by Theorem \ref{compact}.

Let $X$ be a smooth projective variety over $k$. Let $j$ be an integer.
By the isomorphism
between motivic cohomology and higher Chow groups,
$H^j(X,R(0))$ is isomorphic to $CH^0(X,-j;R)$, which is $R$ if $j=0$
and zero otherwise. Let $N$ be a compact object in $DM_{\eff}(k;R)$.
By Theorem \ref{compactgens}, $N$
belongs to the smallest thick subcategory of $DM(k;R)$ that contains $M(X)$
for all smooth projective varieties $X$ over $k$. Since $R$
is noetherian, the exact sequences for Hom in a triangulated
category yield
that $\Hom(N,R(0))=H^0(N,R(0))$ is a finitely generated $R$-module.

For every object $A$ in $DM_{\eff}(k;R)$, the definition of $f_{(1)}$
gives a map $A\arrow f_{(1)}(A)$ which is universal for maps from $A$
into $DM_{\eff}(k;R)$. In particular, $H^0(f_{(1)}(A),R(0))$ maps
isomorphically to $H^0(A,R(0))$. Let $A$ be compact in $DM(k;R)$;
then $f_{(1)}(M)$ is compact in $DM_{\eff}(k;R)$.
So $H^0(M,R(0))$ is a finitely generated
$R$-module. Since $H^j(X,R(i))\cong H^0(M(X)(-i)[-j],R(0))$ for any
smooth projective variety $X$ over $k$ and integers $i$ and $j$,
it follows that all motivic cohomology groups
of smooth projective varieties with $R$ coefficients are finitely
generated.

It remains to show that this conclusion fails for the pairs $(k,R)$
mentioned in the theorem. First, if $R=\Q$, then 
the $\Q$-vector space $H^1(k,\Q(1))=
k^*\otimes\Q$ has infinite dimension if the field $k$
is not an algebraic extension of a finite field. Next, if $R=\Z$,
then the abelian group $H^1(k,\Z(1))=k^*$ is not finitely generated
if $k$ is an infinite field. Finally, if $R=\F_p$ for a prime
number $p\equiv 1 \pmod{3}$ and $k$ is algebraically closed
of characteristic zero, then Schoen found
a smooth projective 3-fold $X$ over $k$ with $CH^2(X)/p$ infinite
\cite[Theorem 0.2]{Schoen}.
If $R=\F_p$ for any prime number $p$, I exhibited
a smooth complex projective 3-fold $X$
with $CH^2(X)/p$ infinite \cite{Totaroinfinite}.
\end{proof}

\section{The dimension filtration on motives}

Let $D_0(k;R)$ (also called $d_{\leq 0}DM(k;R)$ by analogy with
Voevodsky's notation \cite[section 3.4]{Voevodskytri})
be the smallest
localizing subcategory of $DM(k;R)$ that contains $M(X)(-b)$
for all smooth projective varieties $X$ over $k$ and all integers
$b$ such that $b\geq \dim(X)$. The subcategory
$D_0(k;R)$ was useful for constructing and studying
the compactly supported motive of a quotient stack over $k$,
for example of a classifying stack $BG$ \cite[section 8]{Totaromotive}.

In this section we show that the inclusion of $D_0(k;R)$
into $DM(k;R)$ does
not have a left adjoint or a three-fold right adjoint, in many cases.
Ayoub and Barbieri-Viale gave the first example where the left adjoint
does not exist \cite[section 2.5]{AB}. These examples
imply that the subcategory $D_0(k;R)$
need not be closed under products in $DM(k;R)$, which answers
a question in \cite{Totaromotive}, after Lemma 8.8.

One can think of the nonexistence of a left adjoint
as meaning that certain generalizations
of the Albanese variety do not exist. Indeed, Ayoub and Barbieri-Viale,
generalizing an earlier result by Barbieri-Viale and Kahn,
showed that for a field $k$, the inclusion
$$d_{\leq 1}DM_{\eff}(k;\Q)\arrow DM_{\eff}(k;\Q)$$
has a left adjoint LAlb, related to the Albanese variety of a smooth
projective variety
\cite[Theorem 2.4.1]{AB}.

\begin{theorem}
\label{d0left}
(1) The subcategory $D_0(\C;\Q)$ is not closed under products
in $DM(\C,\Q)$, and
the inclusion functor from $D_0(\C;\Q)$ to $DM(\C;\Q)$
does not have a left adjoint.

(2) Let $k$ be an algebraically closed field of characteristic zero,
and let $p$ be a prime number congruent to 1 modulo 3. Then
the subcategory $D_0(k;\F_p)$ is not closed under
products in $DM(k;\F_p)$, and
the inclusion functor from $D_0(k;\F_p)$ to $DM(k;\F_p)$
does not have a left adjoint. If $k=\C$, then this holds
for any prime number $p$.
\end{theorem}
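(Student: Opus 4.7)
My plan is to handle part~(1) by direct citation of \cite[section~2.5]{AB} combined with Theorem~\ref{adjointcriterion}(2), and part~(2) by an adaptation to finite coefficients that follows the template of Corollary~\ref{Zcoeffs} and Theorem~\ref{modpmotive}.

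For part~(1), Ayoub and Barbieri-Viale already establish that the inclusion $D_0(\C;\Q)\to DM(\C;\Q)$ has no left adjoint, using Clemens's quintic threefold in $\P^4_\C$ with Griffiths group $\text{Griff}^2\otimes\Q$ of infinite rank. By Theorem~\ref{adjointcriterion}(2) applied to this inclusion, the non-existence of a left adjoint is equivalent to the inclusion failing to preserve arbitrary products, which translates directly to $D_0(\C;\Q)$ not being closed under products in $DM(\C;\Q)$. So part~(1) is essentially a matter of citation.

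For part~(2), the plan is to exhibit the specific product $P = \prod_{m=1}^\infty \F_p(0)$ and show $P \notin D_0(k;\F_p)$. Each factor $\F_p(0) = M(\Spec k)$ is a generator of $D_0(k;\F_p)$, so this will directly establish the failure of closure under products, hence of the existence of a left adjoint via Theorem~\ref{adjointcriterion}(2). First I would verify, by direct check on generators, that for any smooth projective variety $Y$ of dimension $d$ over $k$, the tensor functor $-\otimes M(Y)(-d)$ preserves $D_0(k;\F_p)$: if $X$ is smooth projective and $b\geq\dim X$, then $M(X)(-b)\otimes M(Y)(-d) = M(X\times Y)(-b-d)$ with $b+d\geq\dim X + d = \dim(X\times Y)$, so the result remains in the generating family. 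Combined with the strong dualizability of $M(Y)(-d)$ (as in the proof of Theorem~\ref{chow}), the hypothesis $P\in D_0(k;\F_p)$ would force $\prod_m M(Y)(-d) = P\otimes M(Y)(-d) \in D_0(k;\F_p)$ for every smooth projective $Y$ over $k$.

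The heart of the argument is then to extract from this membership a finite-dimensionality statement for $CH^2(Y;\F_p)$, via the K\"unneth analysis of Theorem~\ref{chow}. Since motivic homology commutes with products on the target, $H_2(\prod_m M(Y)(-3), \F_p(-2)) = \prod_m H_2(Y, \F_p(1)) = \prod_m CH^2(Y;\F_p)$ for a threefold $Y$; and the corner of the Dugger-Isaksen K\"unneth spectral sequence --- using that $H_*(P, \F_p(*))$ has the same bidegree support as $H_*(\F_p(0), \F_p(*))$, since $P$ is literally a product of copies --- produces a surjection $(\prod_m \F_p)\otimes_{\F_p} CH^2(Y;\F_p) \twoheadrightarrow \prod_m CH^2(Y;\F_p)$, forcing $CH^2(Y;\F_p)$ to be finite-dimensional over $\F_p$. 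The contradiction is then supplied by a smooth projective threefold with $CH^2/p$ infinite: for $p\equiv 1\pmod 3$ over any algebraically closed $k$ of characteristic zero, Schoen's product of three Fermat cubic curves over $\QQ$ \cite[Theorem~0.2]{Schoen}, base-changed to $k$ via Suslin rigidity \cite[Corollary~2.3.3]{SuslinICM}; and for $k=\C$ with arbitrary prime $p$, a very general principally polarized abelian threefold \cite{Totaroinfinite}. The main obstacle I see is justifying the K\"unneth surjectivity at the corner without a mixed Tate hypothesis on $P$, since the Dugger-Isaksen spectral sequence is normally invoked for cellular spectra and $P\in D_0(k;\F_p)$ does not a priori imply that $P$ is cellular; the workaround is to exploit the isomorphism $P\otimes M(Y)(-d)\cong\prod_m M(Y)(-d)$ (which needs only strong dualizability of $M(Y)(-d)$) together with the coincidence of bidegree-wise support of $H_*(P, \F_p(*))$ with that of $\F_p(0)$, to reproduce the corner analysis of Theorem~\ref{chow} directly for the specific product at hand.
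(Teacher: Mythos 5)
Part (1) of your proposal matches the paper: the non-existence of the left adjoint is quoted from Ayoub--Barbieri-Viale, and the failure of closure under products is the translation via Theorem~\ref{adjointcriterion}(2) (using that the inclusion is fully faithful, so preserving products is the same as being closed under them).

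Part (2) has a genuine gap at exactly the point you flag, and your proposed workaround does not close it. The surjectivity of the edge map
$$H_*(P,\F_p(*))\otimes_{H_*(k,\F_p(*))}H_*(Y,\F_p(*))\longrightarrow H_*(P\otimes M(Y),\F_p(*))$$
at the corner in the proof of Theorem~\ref{chow} is a \emph{consequence} of the Dugger--Isaksen K\"unneth spectral sequence converging to the motivic homology of the tensor product, and that convergence is available only because one factor is cellular (mixed Tate). Knowing the bidegree support of $H_*(P,\F_p(*))$ and the isomorphism $P\otimes M(Y)(-d)\cong\prod_m M(Y)(-d)$ gives you the external product map and the $E_2$-term as a bigraded group, but no spectral sequence abutting to $H_*(P\otimes M(Y))$ and hence no surjectivity statement. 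The hypothesis $P\in D_0(k;\F_p)$ cannot be fed into this machine: $D_0$ is generated by the motives $M(X)(-b)$ with $b\geq\dim X$, whose motivic homology is not free (or even finitely generated) over $H_*(k)$, so no Tor spectral sequence over $H_*(k)$ results from resolving $P$ by these generators.

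The paper's argument for (2) is structured differently and avoids K\"unneth entirely. Assume the left adjoint $f_{(1)}$ exists; since $f^*$ preserves direct sums, $f_{(1)}$ preserves compact objects (Theorem~\ref{compact}). For a smooth projective threefold $X$, adjunction gives
$$CH^2(X)/p\cong\Hom_{DM}(M(X)(-2)[-4],\F_p(0))\cong\Hom_{D_0}(f_{(1)}(M(X)(-2)[-4]),\F_p(0)),$$
with $f_{(1)}(M(X)(-2)[-4])$ compact in $D_0$. The key input you are missing is Suslin's theorem: for $Y$ smooth projective over an algebraically closed field and $a\geq\dim Y$, the group $H^b(Y,\Z/p(a))$ agrees with \'etale cohomology and is therefore finite. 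Since every compact object of $D_0$ is built from finitely many $M(Y)(j)[b]$ with $j+\dim Y\leq 0$ by cones and summands, $\Hom_{D_0}(N,\F_p(0))$ is finite for all compact $N$, so $CH^2(X)/p$ would be finite for every smooth projective threefold --- contradicting Schoen (for $p\equiv 1\bmod 3$) or \cite{Totaroinfinite} (for $k=\C$, all $p$). If you want to keep your product-based formulation, you would still need this finiteness input in some form; as written, your corner analysis is unsupported.
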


It would be interesting to find out whether the inclusion
of $D_0(k;R)$ into $DM(k;R)$ has a left adjoint for other fields $k$ and
commutative rings $R$.

\begin{proof}
(1) Ayoub and Barbieri-Viale showed that the inclusion
$$d_{\leq 2}DM_{\eff}(\C;\Q)\arrow DM_{\eff}(\C;\Q)$$
does not have a left adjoint, using Clemens's example
of a complex variety with Griffiths group of infinite
rank \cite[section 2.5]{AB}. (In contrast to Theorem \ref{effright},
it is not clear how to generalize Ayoub and Barbieri-Viale's
argument to arbitrary algebraically closed fields
of characteristic zero.)
The same argument gives
that the inclusion of $D_0(\C;\Q)$ into $DM(\C;\Q)$
does not have a left adjoint.
Equivalently, by Theorem \ref{adjointcriterion},
the subcategory $D_0(\C;\Q)$ is not closed
under products in $DM(\C;\Q)$.

(2) Let $R=\F_p$.
Let $f^*\colon D_0\arrow DM(k;R)$ be the inclusion. Since $D_0(k;R)$
is the smallest localizing subcategory containing a certain set
of compact objects, the inclusion $f^*$ has a right adjoint
$f_*$. Suppose that $f^*$ also has a left adjoint $f_{(1)}$.
Since $f^*$ preserves arbitrary direct sums, $f_{(1)}$
must take compact objects in $DM(k;R)$ to compact
objects in $D_0$, by Theorem \ref{compact}.

Let $X$ be a smooth projective 3-fold over $k$. Then $M(X)(-2)$
is compact in $DM(k;R)$, and so $f_{(1)}(M(X)(-2))$ is compact in $D_0$.
By section \ref{notation},
\begin{align*}
CH^2(X;R) &= H^4(X,R(2))\\
&= \Hom_{DM}(M(X), R(2)[4])\\
&\cong \Hom_{DM}(M(X)(-2)[-4],f^*(R))
\end{align*}
(which makes sense because the object $R$ is in $D_0$)
\begin{align*}
& \cong \Hom_{D_0}(f_{(1)}(M(X)(-2)[-4]), R).
\end{align*}

I claim that $\Hom_{D_0}(N,R)$ is finite for every compact object
$N$ in $D_0$. We know that $N$ can be obtained from the objects
$M(Y)(j)[b]$ with $Y$ smooth projective over $k$, $b\in\Z$
and $j+\dim(Y)\leq 0$ by finitely many cones and taking a summand.
So it suffices to show that $\Hom_{D_0}(M(Y)(j)[b],R)$ is finite
for every smooth projective variety $Y$ 
over $k$, $b\in\Z$, and $j+\dim(Y)\leq 0$.
Equivalently, we want to show that the motivic cohomology
group $H^b(Y,R(a))$ is finite
for all smooth projective varieties $Y$ over $k$, all $b\in\Z$, and all
$a\geq \dim(Y)$. This was proved by Suslin: the group mentioned
is isomorphic to etale cohomology $H^b_{\et}(Y,\Z/p(a))$ and hence
is finite, using that $k$ is algebraically closed
\cite[Corollary 4.3]{Suslin}.

Thus, by two paragraphs back, $CH^2(X)/p$ is finite for every
smooth projective 3-fold $X$ over $k$. This contradicts
the fact that there is a smooth projective
3-fold $X$ over $k$ with $CH^2(X)/p$ infinite, under
our assumptions on $k$ and $p$ \cite[Theorem 0.2]{Schoen},
\cite{Totaroinfinite}. We conclude
that the inclusion of $D_0(k;R)$ into $DM(k;R)$
does not have a left adjoint.
\end{proof}

A simpler argument shows that the inclusion $f^*$ from $D_0(k;R)$
to $DM(k;R)$ does not have a three-fold right adjoint in most cases:

\begin{theorem}
\label{d0right}
Let $k$ be a field, and let $R$ be a commutative noetherian ring in which
the exponential characteristic of $k$ is invertible.
Suppose that there is a smooth projective $k$-variety such that
some motivic cohomology group $H^j(X,R(i))$
is not finitely generated as an $R$--module.
Let $f^*$ be the inclusion of $D_0(k,R)$ into $DM(k,R)$.
Then the right adjoint $f_*$ of $f^*$ does not preserve compact objects;
the right adjoint $f^{(1)}$ of $f_*$ does not preserve arbitrary direct 
sums; and $f^{(1)}$ does not have a right adjoint:
$$f^*\dashv f_* \dashv f^{(1)}$$

These negative results hold, for example,
if $R=\Q$ and $k$ is not an algebraic extension
of a finite field; or $R=\Z$ and $k$ is an infinite field;
or $R=\F_p$ for any prime number $p$ and $k=\C$;
or $R=\F_p$ with $p$ a prime number congruent to 1 modulo 3
and $k$ is an algebraically closed field of characteristic zero.
\end{theorem}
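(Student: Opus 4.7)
The plan is to prove the contrapositive: assuming $f_*$ preserves compact objects, show that $H^j(X,R(i))$ is a finitely generated $R$-module for every smooth projective variety $X$ over $k$ and all integers $i,j$. The strategy parallels Theorem \ref{effleft} but uses the always-existing right adjoint $f_*$ (Lemma \ref{tworight}) in place of a hypothetical left adjoint, and invokes Poincar\'e duality to bring motivic cohomology into a form amenable to the adjunction $f^*\dashv f_*$.

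First I would establish that $\Hom_{D_0(k;R)}(R(0), N)$ is a finitely generated $R$-module for every compact object $N$ of $D_0(k;R)$. By Theorem \ref{compactgens}, such an $N$ lies in the smallest thick subcategory of $D_0(k;R)$ containing the compact generators $M(Y)(-b)$ for $Y$ smooth projective over $k$ and $b\geq\dim(Y)$. Consider the shift-invariant property $P(N)$ that $\Hom_{D_0}(R(0),N[s])$ is finitely generated for every integer $s$; since $R$ is noetherian, $P$ is closed under distinguished triangles, direct summands, and finite sums, so it suffices to verify $P$ on the generators. But $\Hom_{DM(k;R)}(R(0),M(Y)(-b)[s])\cong H_{-s}(Y,R(b))$, and by the vanishing range for motivic homology recalled in Section \ref{notation} this group is either zero or a finitely generated free $R$-module for $b\geq\dim(Y)$.

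Second, for a smooth projective $X$ of dimension $d$, the strong dualizability $M(X)^\vee\cong M(X)(-d)[-2d]$ rewrites
$$H^j(X,R(i)) = \Hom_{DM}(M(X),R(i)[j]) \cong \Hom_{DM}\bigl(R(0),\, M(X)(i-d)[j-2d]\bigr).$$
The object $B:=M(X)(i-d)[j-2d]$ is compact in $DM(k;R)$ as a Tate twist and shift of the compact generator $M(X)$. If $f_*$ preserves compact objects, then $f_*B$ is compact in $D_0(k;R)$, and the adjunction $f^*\dashv f_*$ (together with $R(0)\in D_0(k;R)$) gives
$$H^j(X,R(i)) \cong \Hom_{DM}(R(0), B) \cong \Hom_{D_0}(R(0), f_*B),$$
which is finitely generated by the first step, contradicting the hypothesis. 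The remaining two conclusions --- that $f^{(1)}$ fails to preserve direct sums, and hence has no right adjoint --- follow formally from Theorem \ref{compact} applied to $f_*\dashv f^{(1)}$ and from Theorem \ref{adjointcriterion}(1) applied to $f^{(1)}$, using that $D_0(k;R)$ is compactly generated.

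The main obstacle is locating the correct dual formulation: a naive imitation of Theorem \ref{effleft} would write $H^j(X,R(i))=\Hom_{DM}(M(X)(-i)[-j],R(0))$, but the source $M(X)(-i)[-j]$ need not lie in $D_0(k;R)$, so one cannot directly exploit the $f^*\dashv f_*$ adjunction. Poincar\'e duality swaps source and target, putting $R(0)\in D_0(k;R)$ as the source, where the thick-subcategory argument is available. The list of examples matches Theorem \ref{effleft}: $H^1(k,R(1))=k^*\otimes R$ is not finitely generated in the $\Q$ and $\Z$ cases, and the Schoen/Totaro constructions provide smooth projective 3-folds with $CH^2(X;\F_p)$ infinite in the $\F_p$ cases.
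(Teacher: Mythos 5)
Your proposal is correct and follows essentially the same route as the paper: reduce to showing that $\Hom(R(0),N)$ is finitely generated for compact $N$ in $D_0(k;R)$ via Theorem \ref{compactgens}, the vanishing/computation of $H_j(Y,R(b))$ for $b\geq\dim(Y)$, and noetherianity, then transport this across the adjunction $f^*\dashv f_*$ and deduce the remaining claims from Theorems \ref{compact} and \ref{adjointcriterion}. The only (welcome) difference is that you make explicit the Poincar\'e duality step $M(X)^\vee\cong M(X)(-d)[-2d]$ converting motivic cohomology into $\Hom(R(0),-)$ of a compact object, which the paper leaves implicit by phrasing its conclusion in terms of motivic homology.
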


\begin{proof}
Suppose that there is a smooth projective variety $X$ over $k$
such that some motivic cohomology group $H^j(X,R(i))$
is not finitely generated as an $R$-module.
We will show that the right adjoint
$f_*\colon DM(k;R)\arrow D_0(k;R)$
does not preserve compact objects. Given that,
the right adjoint $f^{(1)}$ of $f_*$ does not preserve
arbitrary direct sums, by Theorem \ref{compact}. Therefore,
$f^{(1)}$ does not have a right adjoint.

If $f_*$ preserves compact objects, then for every compact object
$M$ in $DM(k;R)$, we have a compact object $f_*M$ in $D_0(k;R)$
and a map $f_*M\arrow M$ which is universal for maps from $D_0(k;R)$
to $M$. In particular, since $R(0)$ is in $D_0(k;R)$,
$H_0(f_*M,R(0))\arrow H_0(M,R(0))$ is a bijection.

Let $X$ be a smooth projective variety of dimension $n$,
and let $b$ be an integer
such that $b\geq n$. (The objects $N=M(X)(-b)$ of this form
generate $D_0(k;R)$.) I claim that the $R$-module
$H_0(N[-j];R(0))$ is finitely generated for all integers $j$.
This group is $H_j(X,R(b))$. By the isomorphism of motivic
homology with higher Chow groups (see section \ref{notation}),
this group is zero if $b>n$, and
\begin{align*}
H_j(X,R(n)) &\cong CH^0(X,j-2n;R)\\
&\cong \begin{cases} R & \text{if }j=2n\\
0 & \text{otherwise.}
\end{cases}
\end{align*}
Thus $H_0(N[-j];R(0))$ is either $0$ or $R$, and hence is a finitely
generated $R$-module.

Every compact object in $D_0(k;R)$ belongs to the smallest
thick subcategory that contains $M(X)(-b)$ for all smooth projective
varieties $X$ over $k$ and all $b\geq \dim(X)$ (Theorem \ref{compactgens}).
Therefore,
the long exact sequences for Hom in a triangulated category,
plus the fact that $R$ is noetherian, yield that the $R$-module
$H_0(N,R(0))$ is finitely generated for all
compact objects $N$ in $D_0(k;R)$. If $f^{(1)}$ has a right
adjoint, then (as explained above) it would follow that
the $R$-module $H_0(N,R(0))$ is finitely generated
for all compact objects $N$ in $DM(k;R)$. In particular,
all motivic homology groups of smooth projective $k$-varieties
with $R$ coefficients would be finitely generated, as we want.

Finite generation of motivic cohomology
fails for the pairs $(k,R)$ mentioned in the theorem,
by the proof of Theorem \ref{effleft}.
\end{proof}

\section{Mixed Tate motives over finite fields}

We now show that some of the questions in this paper would have a different
answer for $k$ algebraic over a finite field,
assuming the Tate-Beilinson conjecture.
I do not know what to expect over number fields $k$, or with $k$
replaced by a regular scheme of finite type over $\Z$.

Let $p$ be a prime number. The {\it strong Tate conjecture }over $\F_p$
says that for smooth projective varieties $X$ over $\F_p$ and a prime
number $l\neq p$, the generalized eigenspace for the eigenvalue 1
of Frobenius on $H^{2i}(X_{\overline{\F_p}},\Q_l(i))$ is spanned by
codimension-$i$ algebraic cycles on $X$ with $\Q_l$ coefficients.
The {\it Tate-Beilinson conjecture }over $\F_p$
is the combination of the strong Tate conjecture over $\F_p$
with the conjecture that rational and numerical equivalence coincide,
for algebraic cycles with $\Q$ coefficients on smooth projective
varieties over $\F_p$.

\begin{theorem}
\label{finitefield}
Let $k$ be an algebraic extension field of $\F_p$.
Assume the Tate-Beilinson conjecture. Then the inclusion $f^*$
of the subcategory $DMT(k;\Q)$ into $DM(k;\Q)$ is a Frobenius functor.
That is, the right adjoint functor $f_*$ from $DM(k;\Q)$ to $DMT(k;\Q)$
is also left adjoint to $f^*$. It follows that the subcategory
$DMT(k;\Q)$ is closed under both direct sums and products
in $DM(k;\Q)$.
\end{theorem}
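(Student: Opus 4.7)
The strategy uses Balmer-Dell'Ambrogio-Sanders \cite{BDS}. Since $f^*$ is a tensor-triangulated inclusion and $DMT(k;\Q)$ is rigidly-compactly generated, $f^*$ is Frobenius if and only if (a) it admits a left adjoint $f_{(1)}$ (the Grothendieck-Neeman condition), and (b) the relative dualizing object $\omega_f:=f^{(1)}(\Q(0))$ is isomorphic to $\Q(0)$: once (a) holds, BDS provides a canonical Wirthm\"uller isomorphism $f^{(1)}(T)\cong f^*(T)\otimes\omega_f$, so Frobenius ($f_{(1)}\cong f_*$, equivalently $f^{(1)}\cong f^*$) amounts to $\omega_f\cong\Q(0)$.

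For (a), I would first record that Tate-Beilinson implies the Beilinson-Parshin vanishing (via Geisser, say): for every smooth projective variety $X$ over $k$, motivic cohomology $H^j(X,\Q(i))$ is concentrated in $j=2i$ and equals the finite-dimensional $\Q$-vector space $CH^i(X;\Q)$. Applied to $X=\Spec(k)$ this shows $H^*(k,\Q(*))$ is $\Q$ in bidegree $(0,0)$ alone, so $DMT(k;\Q)$ is equivalent to the derived category of $\Z$-graded $\Q$-vector spaces (with $\Q(a)[b]$ placed in bigrade $(a,b)$). Using the adjunction $f^*\dashv f_*$ and Poincar\'e duality on a smooth projective $X$ of dimension $d$,
$$\Hom_{DMT}(\Q(a)[b],f_*M(X)) = H_b(X,\Q(a)) \cong H^{2d-b}(X,\Q(d-a))$$
vanishes unless $b=2a$ and equals $CH_a(X;\Q)$ in that case. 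Hence $f_*M(X)\cong\bigoplus_{i=0}^{d}CH_i(X;\Q)\otimes\Q(i)[2i]$, a finite sum of Tate motives and in particular compact in $DMT(k;\Q)$. As the twists $M(X)(b)$ generate the compact objects of $DM(k;\Q)$, $f_*$ preserves compacts; by Theorem~\ref{compact}, $f^{(1)}$ preserves direct sums; by Theorem~\ref{adjointcriterion}, $f^{(1)}$ has a right adjoint; and by Theorem~\ref{five}, $f^*$ has a left adjoint.

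For (b), I would compute $\omega_f$ on these same generators. By $f_*\dashv f^{(1)}$ and the projection formula $f_*M(X)(a)[b]=f_*M(X)\otimes\Q(a)[b]$,
$$\Hom_{DM}(M(X)(a)[b],\omega_f) = \Hom_{DMT}(f_*M(X)(a)[b],\Q(0))$$
equals $CH_{-a}(X;\Q)^*$ when $b=2a$ and $0$ otherwise, since under Tate-Beilinson the only index $i$ contributing in the explicit expansion of $f_*M(X)(a)[b]$ is $i=-a$. On the other hand, $\Hom_{DM}(M(X)(a)[b],\Q(0))=H^{-b}(X,\Q(-a))$ is, again by Beilinson-Parshin, $CH^{-a}(X;\Q)$ when $b=2a$ and $0$ otherwise. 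The second half of Tate-Beilinson—rational and numerical equivalence coincide on $\Q$-coefficient cycles on smooth projective $k$-varieties—makes the intersection pairing $CH^{-a}(X;\Q)\otimes CH_{-a}(X;\Q)\to\Q$ perfect, giving a natural isomorphism $CH^{-a}(X;\Q)\cong CH_{-a}(X;\Q)^*$. Thus $\omega_f$ and $\Q(0)$ represent the same functor on the compact generators, so $\omega_f\cong\Q(0)$ in $DM(k;\Q)$, and $f^*$ is Frobenius.

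The main obstacle is checking that the identification $\omega_f\cong\Q(0)$ is genuinely natural in the source object, which boils down to the projection-formula compatibility of the perfect intersection pairing on smooth projective varieties; this is standard once numerical equals rational equivalence holds with $\Q$-coefficients. Once Frobenius is established, $f^*$ is simultaneously a left and a right adjoint, so preserves arbitrary direct sums and products; in particular $DMT(k;\Q)$ is closed under products in $DM(k;\Q)$, completing the theorem.
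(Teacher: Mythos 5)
Your proposal is correct and follows essentially the same route as the paper: use Geisser's theorem that Tate--Beilinson implies Parshin's vanishing to show $f_*M(X)$ is a finite sum of Tate motives $\Q(i)[2i]$ (hence $f_*$ preserves compacts and, via Balmer--Dell'Ambrogio--Sanders, $f^*$ has a left adjoint), then identify $\omega_f\cong\Q(0)$ on the generators $M(X)(a)[b]$ using that rational equals numerical equivalence makes $CH^{-a}(X;\Q)\to CH_{-a}(X;\Q)^*$ an isomorphism. The naturality point you flag at the end is handled in the paper by observing that the comparison is induced by the counit $C(N)\to N$ and that the objects for which it is an isomorphism form a localizing subcategory, so checking it on the generators suffices.
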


Thus, given Tate-Beilinson,
there is an infinite sequence of adjoint functors,
consisting of $f^*$ and $f_*$ in turn:
$$\cdots \dashv f^* \dashv f_* \dashv f^* \dashv f_* \dashv \cdots.$$

As far as I know,
the Bass conjecture (that $K$-groups of smooth varieties
over $\F_p$ are finitely generated) would not be enough to imply
that $f^*$ has a left adjoint. In particular, Bruno Kahn explained
to me that the Bass conjecture is not known to imply Parshin's conjecture,
which is needed for the following argument. By contrast, the analog
of the Bass conjecture for etale motivic cohomology would
imply Parshin's conjecture.

\begin{proof}
Let $k$ be an algebraic extension field of $\F_p$,
and let $X$ be a smooth projective variety over $k$.
Given the Tate-Beilinson conjecture, the Chow groups
$CH^*(X,\Q)$ are finite-dimensional
$\Q$-vector spaces (and in fact $\dim_{\Q}CH^i(X,\Q)\leq 
\dim_{\Q_l}H^{2i}_{\et}(X_{\overline{k}},\Q_l)$). Also,
Geisser showed that the Tate-Beilinson conjecture
implies Parshin's conjecture that $K_i(X)\otimes\Q=0$ for $i>0$
\cite[Theorem 1.2]{Geisser}. Equivalently,
$H^j(X,\Q(i))=0$ for $j\neq 2i$.

Let $f^*\colon DMT(k;\Q)\arrow DM(k;\Q)$ be the inclusion. Since $DMT(k;\Q)$
is the smallest localizing subcategory containing a certain set
of compact objects, the inclusion $f^*$ has a right adjoint
$f_*$ (by Lemma \ref{tworight}). We also write $N\mapsto C(N)$
for $f_*$.
To prove that $f^*$ also has a left adjoint $f_{(1)}$, it suffices
to show that $f^*$ has a three-fold right adjoint by Theorem
\ref{five}. Equivalently, we have to show that $f^{(1)}$ preserves
arbitrary direct sums (Theorem \ref{adjointcriterion}), or again
that $f_*$ (also called $N\mapsto C(N)$)
preserves compact objects (Theorem \ref{compact}).

The subcategory of compact objects in $DM(k;\Q)$ is the smallest
thick subcategory that contains $M(X)(b)$ for all smooth projective
varieties $X$ over $k$ and all integers $b$. So it suffices
to show that $C(M(X)(b))$ is compact under these assumptions.
Since $DMT(k;\Q)$ is closed under tensoring with $\Q(b)$, it suffices
to show that $C(M(X))$ is compact for every smooth projective $k$-variety
$X$.

As discussed above,
our assumptions give that the $\Q$-vector space
$H^j(X,\Q(i))$ is zero if $j\neq 2i$ and
finite-dimensional if $j=2i$. Also, Quillen's calculation
of the $K$-theory of finite fields \cite[Theorem 8]{Quillen}
gives that $\Hom_{DM(k;\Q)}(\Q(0),\Q(i)[j])$ is $\Q$ if $i=j=0$
and zero otherwise. It follows that there is
a finite direct sum $N$ of Tate motives $\Q(i)[2i]$ and a morphism
$N\arrow M(X)$ that induces an isomorphism on motivic homology
groups. So $N$ is isomorphic to $C(M(X))$, and we have shown
that $C(M(X))$ is compact. This completes the proof
that the inclusion of $DMT(k;\Q)$ into $DM(k;\Q)$ has a left
adjoint as well as a right adjoint.

Finally, we want to show that $f^*$
is a Frobenius functor, that is, that
the right adjoint $f_*$ to the inclusion $f^*$ is also left adjoint
to $f^*$. We know from Lemma \ref{tworight} that $f_*$ has a right
adjoint $f^{(1)}$. Recall that we use the notation $N\mapsto C(N)$
for $f_*$. By Balmer, Dell'Ambrogio, and Sanders, the object
$\omega_f=f^{(1)}(\Q(0))$ (the {\it relative dualizing object }for $f^*$)
is characterized by the existence of a natural bijection
$$\Hom_{DMT}(C(N),\Q(0))\cong \Hom_{DM}(N,\omega_f)$$
for all $N$ in $DM(k;\Q)$ \cite[Definition 1.4]{BDS}. Given
that $f^*$ has a left adjoint $f_{(1)}$, $f^*$ is
a Frobenius functor if and only if $\omega_f\cong \Q(0)$
\cite[Remark 1.15]{BDS}.

Thus, it suffices to show that for $N$ in $DM(k;\Q)$, the map
$C(N)\arrow N$ induces a bijection $H^0(N,\Q(0))\arrow H^0(C(N),\Q(0))$.
Let ${\cal S}$ be the full subcategory of objects $N$ such that
$H^0(C(N)[j],\Q(0))\arrow H^0(N[j],\Q(0))$ is a bijection for all integers $j$.
Clearly ${\cal S}$ is a triangulated subcategory. Also,
$N\mapsto C(N)$ preserves
arbitrary direct sums, by Theorems \ref{adjointcriterion} and \ref{tworight}.
It follows that ${\cal S}$
is a localizing subcategory, using that 
$H^0(\oplus N_{\alpha},\Q(0))\cong \prod H^0(N_{\alpha},\Q(0))$
for any set of objects $N_{\alpha}$. So ${\cal S}$ is equal to $DM(k;\Q)$
as we want if ${\cal S}$ contains $M(X)(-b)$ for all smooth projective
varieties $X$ and all integers $-b$.

To prove this, we use that, by the analysis of $C(M(X))$ above,
the motive $N=M(X)(-b)[-c]$ for integers $b$ and $c$ satisfies
$$C(N)\cong \oplus_j \Q(j-b)[2j-c]\otimes CH_j(X,\Q).$$
We have 
\begin{align*}
H^0(N,\Q(0)) &\cong H^c(X,\Q(b))\\
 &\cong \begin{cases} 0 & \text{if }c\neq 2b\\
CH^b(X;\Q) & \text{if }c=2b.
\end{cases}
\end{align*}
On the other hand, by the description of $C(N)$ above,
$$H^0(C(N),\Q(0))\cong \begin{cases} 0 & \text{if }c\neq 2b\\
CH_b(X;\Q)^* & \text{if }c=2b.
\end{cases}$$
Since rational and numerical equivalence coincide (by the
Tate-Beilinson conjecture),
the natural map $CH^b(X;\Q)\arrow CH_b(X;\Q)^*$ is a bijection.
This shows that $M(X)(-b)$ is in the subcategory ${\cal S}$
for all smooth projective varieties $X$ over $k$ and all integers $b$.
As a result, ${\cal S}$ is equal to $DM(k;\Q)$. That is, the inclusion
from $DMT(k;\Q)$ into $DM(k;\Q)$ is a Frobenius functor.
\end{proof}


\small \sc UCLA Mathematics Department, Box 951555,
Los Angeles, CA 90095-1555

totaro@math.ucla.edu
\end{document}